\tikzset{snake it/.style={decorate, decoration=snake}}
\theoremstyle{plain}
\newtheorem{thm}{Theorem}[section]
\newtheorem{lem}[thm]{Lemma}
\newtheorem{prop}[thm]{Proposition}
\theoremstyle{definition}
\theoremstyle{remark}
\newtheorem{rmk}[thm]{Remark}
\newcommand{\BA}{{\mathbb{A}}}
\newcommand{\BC}{{\mathbb{C}}}
\newcommand{\BF}{{\mathbb{F}}}
\newcommand{\BP}{{\mathbb{P}}}
\newcommand{\BQ}{{\mathbb{Q}}}
\newcommand{\BZ}{{\mathbb{Z}}}
\newcommand{\CA}{{\mathcal A}}
\newcommand{\CB}{{\mathcal B}}
\newcommand{\CD}{{\mathcal D}}
\newcommand{\CE}{{\mathcal E}}
\newcommand{\CF}{{\mathcal F}}
\newcommand{\CR}{{\mathcal R}}
\newcommand{\CU}{{\mathcal U}}
\newcommand{\FM}{{\mathfrak{M}}}
\DeclareFontFamily{OT1}{rsfs}{}
\DeclareFontShape{OT1}{rsfs}{n}{it}{<-> rsfs10}{}
\DeclareMathAlphabet{\curly}{OT1}{rsfs}{n}{it}
\renewcommand\hom{\curly H\!om}
\begin{document}
\title[The period--index problem]{The period--index problem for hyper-K\"ahler varieties via hyperholomorphic bundles}
\date{\today}

\author[J. Hotchkiss]{James Hotchkiss}
\address{Columbia University}
\email{james.hotchkiss@columbia.edu}

\author[D. Maulik]{Davesh Maulik}
\address{Massachusetts Institute of Technology}
\email{maulik@mit.edu}

\author[J. Shen]{Junliang Shen}
\address{Yale University}
\email{junliang.shen@yale.edu}

\author[Q. Yin]{Qizheng Yin}
\address{Peking University}
\email{qizheng@math.pku.edu.cn}

\author[R. Zhang]{Ruxuan Zhang}
\address{Fudan University}
\email{rxzhang18@fudan.edu.cn}

\begin{abstract}
We prove new bounds for the period--index problem for hyper-K\"ahler varieties of $K3^{[n]}$-type using projectively hyperholomorphic bundles constructed by Markman. We show that $\mathrm{dim}(X)$ is a bound for any $X$ of $K3^{[n]}$-type. We also show that $\frac{1}{2}\mathrm{dim}(X)$ is a bound for most Brauer classes when the Picard rank of $X$ is at least two, providing evidence for a conjecture of Huybrechts.
\end{abstract}

\maketitle

\setcounter{tocdepth}{1} 

\tableofcontents
\setcounter{section}{-1}

\section{Introduction}

Throughout, we work over the complex numbers $\BC$.

\subsection{The period--index problem}

Let $X$ be a nonsingular projective variety,
and let $\alpha \in \mathrm{Br}(X)$ be a class in the Brauer group of $X$.
Two basic invariants of $\alpha$ are its \emph{period} and its \emph{index}.  
The period $\mathrm{per}(\alpha)$ is nothing more than the order of $\alpha$ in $\mathrm{Br}(X)$, which is a torsion group, 
while the index is given by $\mathrm{ind}(\alpha) = \gcd\{\sqrt{\mathrm{rk}(\mathcal{A})}\}$, as $\mathcal{A}$ runs over the Azumaya algebras of class $\alpha$.
It is an elementary fact that $\mathrm{per}(\alpha)$ and $\mathrm{ind}(\alpha)$ share the same prime factors, and that $\mathrm{per}(\alpha)$ divides $\mathrm{ind}(\alpha)$. 

In this paper, we are concerned with the \emph{period--index problem} for the variety $X$, which is the problem of determining the least positive integer $e(X)$ such that the inequality
\begin{equation}
\label{eq:period-index-problem}
    \mathrm{ind}(\alpha) \mid \mathrm{per}(\alpha)^{e(X)}
\end{equation}
holds for all $\alpha \in \mathrm{Br}(X)$. 
The motivation for the problem comes from the theory of central simple algebras over the function field $\mathbb{C}(X)$:
There is a natural inclusion $\mathrm{Br}(X) \subset \mathrm{Br}(\mathbb{C}(X))$, and the index of a class $\alpha \in \mathrm{Br}(X)$ coincides with $\deg(D) := \sqrt{\mathrm{dim}_{\mathbb{C}(X)}(D)}$, where $D$ is the unique division algebra over $\mathbb{C}(X)$ of class $\alpha$, viewed as an element of $\mathrm{Br}(\mathbb{C}(X))$.
Determining the degree of a central division algebra over a field in terms of its period is a fundamental classical problem, originating in the study of Brauer groups of local and global fields from the early part of the twentieth century; for an introduction, see \cite[Section 4]{Auel} or \cite{CT-bourbaki}.

If $X$ is a surface, then $\mathrm{per}(\alpha) = \mathrm{ind}(\alpha)$ for all $\alpha \in \mathrm{Br}(X)$ by a theorem of de Jong \cite{dJ}.
In higher dimensions, however, little is known, beyond the fact (proved in \cite{HM2}) that a bound as in \eqref{eq:period-index-problem} exists. 
From the longstanding period--index conjecture for the function field $\mathbb{C}(X)$ from~\cite{CT-german}, one expects
\begin{equation}
\label{eq:period-index-conj}
    \mathrm{ind}(\alpha) \mid \mathrm{per}(\alpha)^{\mathrm{dim}(X) - 1}
\end{equation}
for Brauer classes on all nonsingular projective varieties $X$; moreover, there are varieties of each dimension for which the bound \eqref{eq:period-index-conj} is sharp \cite{CT-gabber}.
In \cite{HP}, the period--index conjecture (\emph{i.e.},~$\mathrm{ind}(\alpha) \mid \mathrm{per}(\alpha)^2$) was proven for Brauer classes on abelian threefolds.

In \cite{H_PI}, Huybrechts studied the period--index problem for hyper-K\"ahler varieties, and proved the following result concerning Lagrangian fibrations. It shows that if $X$ admits a Lagrangian fibration, then $e(X)$ can be far smaller than the exponent $\mathrm{dim}(X)-1$ occuring in the period--index conjecture, at least if $\mathrm{per}(\alpha)$ avoids finitely many primes.

\begin{thm}[Huybrechts \cite{H_PI}]\label{thm0.1}
Let $X$ be a hyper-K\"ahler variety which admits a Lagrangian fibration. Then there exists an integer $N_X$ such that
\begin{equation}\label{H_Conj}
\mathrm{ind}(\alpha) \mid \mathrm{per}(\alpha)^{\frac{\mathrm{dim}(X)}{2}}
\end{equation}
for all $\alpha \in \mathrm{Br}(X)$ with $\mathrm{per}(\alpha)$ coprime to $N_X$.
\end{thm}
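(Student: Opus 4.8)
The plan is to reduce the period--index problem on the $2n$-dimensional variety $X$ to the analogous problem on the $n$-dimensional abelian variety appearing as the generic fiber of the Lagrangian fibration, where an explicit isogeny of degree $\mathrm{per}(\alpha)^n$ will furnish a splitting field. Write $\pi \colon X \to B$ for the fibration, so that $\dim B = n$ and, setting $K = \BC(B)$, the generic fiber $A := X_\eta$ is an $n$-dimensional abelian variety over $K$ (or a torsor thereunder, which we may assume trivial after a controlled base change). Since $\BC(X) = K(A)$, restriction to the generic fiber identifies $\mathrm{ind}(\alpha)$ with $\mathrm{ind}(\alpha_\eta)$ for $\alpha_\eta \in \mathrm{Br}(A)$ the restriction, while $\mathrm{per}(\alpha_\eta) \mid \mathrm{per}(\alpha)$. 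Because period and index share the same prime factors and both behave multiplicatively under a coprime factorization of the period, it suffices to bound the $\ell$-primary part of the index for each prime $\ell \nmid N_X$ separately; I would thus fix $\ell$ and a class of period $m = \ell^k$, aiming for $\mathrm{ind}(\alpha_\eta) \mid m^n$.

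The heart of the argument is a theta-group computation on $A$. The image of $\alpha_\eta$ in $H^2(A_{\overline K}, \mu_m)$ modulo the N\'eron--Severi contribution lands in $\wedge^2 H^1(A_{\overline K}, \mu_m)$, which I interpret as a nondegenerate alternating form $\varphi$ on the $m$-torsion $A[m] \cong (\BZ/m)^{2n}$ --- equivalently, the commutator pairing of the Mumford theta extension $1 \to \mu_m \to \CG \to A[m] \to 1$. For an isogeny $f \colon A' \to A$ one has $f^\ast \alpha_\eta = 0$ exactly when the image $f(A'[m])$ is isotropic for $\varphi$. Choosing a Lagrangian $L \subset A[m]$ (maximal isotropic, of order $m^n$) and factoring multiplication as $A \xrightarrow{q} A/L \xrightarrow{p} A$ with $p \circ q = [m]$, one checks $\deg p = m^{2n}/m^n = m^n$ and that $p\big((A/L)[m]\big)$ is again Lagrangian, whence $p^\ast \alpha_\eta = 0$. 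The function field extension $K(A/L)/K(A)$ then has degree $m^n$ and splits $\alpha_\eta$, giving $\mathrm{ind}(\alpha_\eta) \mid m^n = \mathrm{per}(\alpha)^{\dim(X)/2}$.

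Two points remain, and together they constitute the main obstacle and explain the role of $N_X$. First, the Lagrangian $L$, and hence the isogeny $p$, must be defined over $K$ rather than merely over $\overline K$; since a maximal isotropic subspace of a symplectic $G_K$-module need not be Galois-stable, one must pass to a finite extension whose degree is coprime to $\ell$ (so as not to inflate the $\ell$-primary index), and the primes obstructing this are absorbed into $N_X$. Second, the computation above captures only the alternating graded piece of $\mathrm{Br}(A)$; the Hochschild--Serre filtration also carries pieces coming from $H^1\big(K, \mathrm{Pic}^0(A_{\overline K})\big)$ and from $\mathrm{Br}(K) = \mathrm{Br}(\BC(B))$. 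The $H^1$-piece is split by a multiplication isogeny and contributes a strictly smaller exponent, while the pullback-from-$B$ piece must be controlled using $\mathrm{cd}(\BC(B)) = n$ together with the degeneration locus of $\pi$ and the failure of the N\'eron-model torsion to be \'etale; isolating the finitely many primes at which these phenomena interfere is exactly what the integer $N_X$ is designed to handle. Reconciling these lower filtration steps with the Lagrangian bound, without over-counting the splitting degree, is the step I expect to require the most care.
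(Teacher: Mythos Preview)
Your approach is genuinely different from the paper's, and in fact much closer in spirit to Huybrechts' original argument in \cite{H_PI}. The present paper does not attempt the general statement at all: it only re-proves the $K3^{[n]}$-type case, as an immediate corollary of Theorem~\ref{thm0.4}, via a purely lattice-theoretic route. Concretely, a Lagrangian fibration supplies two Picard classes --- a polarization and the pullback of a hyperplane from the base --- whose BBF pairing matrix is $Q(x,y)=C_X\cdot xy$; for $\ell$ coprime to $C_X$ this hyperbolic-type form represents every residue class mod $\ell$, so conditions (i) and (ii) of Theorem~\ref{thm0.4} are met trivially, and Markman's projectively hyperholomorphic bundles then furnish an $\alpha$-twisted bundle of rank dividing $(2^{2n}n!)\cdot\ell^{n}$. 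No abelian fibers, isogenies, or Galois cohomology enter; the Lagrangian fibration is used only to produce an isotropic Picard class. What this buys is a uniform mechanism (Theorem~\ref{thm0.4}) that also yields Theorems~\ref{thm0.2} and~\ref{thm0.6}; what it costs is the restriction to $K3^{[n]}$-type.

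Your outline, by contrast, works fiberwise through the theta-group of the generic abelian fiber. The Lagrangian-isogeny construction for the top graded piece is sound in principle, but two of the points you flag as ``requiring care'' are more than bookkeeping. First, invoking $\mathrm{cd}(\BC(B))=n$ to control the $\mathrm{Br}(K)$ contribution does not by itself give an index bound --- that would be the period--index conjecture for $\BC(\BP^n)$, which is open for $n\geq 3$. One has to use instead that $\alpha$ extends over all of $X$, so that only a very restricted slice of $\mathrm{Br}(A)$ is actually in play; this is where the hyper-K\"ahler structure (and not just the existence of an abelian fibration) does real work in Huybrechts' argument. Second, obtaining a $K$-rational Lagrangian in $A[m]$ after a coprime-degree extension is not automatic from the symplectic form alone; it is the principal polarization on the fibers, intrinsic to the Lagrangian-fibration package, that singles out a canonical maximal isotropic and makes the descent go through. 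Your plan is a reasonable roadmap toward Huybrechts' proof, but it is not the argument this paper gives, and the two gaps above are exactly where the substance of \cite{H_PI} lies.
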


An explicit description of $N_X$ was given in \cite[(0.1)]{H_PI}. Huybrechts further conjectured that the equation (\ref{H_Conj}) holds with $N_X =1$ for any hyper-K\"ahler variety $X$ \cite[Conjecture 0.2]{H_PI}, and verified it for the Hilbert scheme of points on a $K3$ surface \cite[Theorem 0.4]{H_PI}.

The purpose of this note is to study the period--index problem for hyper-K\"ahler varieties of $K3^{[n]}$-type. 
Using Markman's projectively hyperholomorphic bundles from \cite{Markman}, we are able to obtain new bounds for the period--index problem. For varieties of $K3^{[n]}$-type, our results also recover and provide a new proof of Theorem \ref{thm0.1}.
The idea of applying projectively hyperholomorphic bundles to classical problems about Brauer groups goes back to the work of Huybrechts and Schr\"oer \cite{HS} on Grothendieck's question for Brauer groups of analytic $K3$ surfaces. 

\subsection{Hyper-K\"ahler varieties of $K3^{[n]}$-type}\label{sect0.2}

Throughout Section \ref{sect0.2}, we consider $X$ a hyper-K\"ahler variety of $K3^{[n]}$-type, \emph{i.e.}, it is deformation equivalent to the Hilbert scheme of~$n$ points on a $K3$ surface. As the $K3$ surface case has been dealt with, we assume $n \geq 2$.

Our first result provides a uniform bound for any $X$ of $K3^{[n]}$-type. For notational convenience, we set
\[
I_X:= \mathrm{Index}\left( T(X) \hookrightarrow H^2(X,Z)/\mathrm{Pic}(X) \right)
\]
where $T(X) \subset H^2(X, \BZ)$ is the transcendental lattice. Note that the embedding of the lattices on the right-hand side is always of finite index.

\begin{thm}\label{thm0.2}
Assume that $X$ admits a primitive polarization of degree $2h$. We have 
    \begin{equation}\label{weak_bound}
        \mathrm{ind}(\alpha) \mid \mathrm{per}(\alpha)^{\mathrm{dim}(X)}
\end{equation}
for all $\alpha \in \mathrm{Br}(X)$ with $\mathrm{per}(\alpha)$ coprime to $n!hI_X$.  
\end{thm}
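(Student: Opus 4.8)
The plan is to produce, for a Brauer class $\alpha$ of period $p$ coprime to $n!hI_X$, an Azumaya algebra (equivalently a twisted sheaf) whose rank forces $\mathrm{ind}(\alpha)$ to divide a power of $p$ with exponent at most $\dim(X) = 2n$. The central input is Markman's construction of projectively hyperholomorphic bundles: given a $K3^{[n]}$-type variety $X$ together with a suitable class, Markman builds a twisted vector bundle $E$ on $X$ whose projectivization deforms along the whole twistor/moduli family, and whose Brauer class is the given $\alpha$ (or a controlled multiple of it). The rank of such a bundle is the fundamental numerical quantity I must bound, since $\mathrm{ind}(\alpha)$ divides $\mathrm{rk}(E)$ once $E$ is an $\alpha$-twisted bundle.

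\medskip

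First I would reduce to the case where $\mathrm{per}(\alpha) = p$ is prime, by the standard multiplicativity of the period--index relation: if the conclusion holds for prime periods coprime to $n!hI_X$, it propagates to all periods coprime to $n!hI_X$ by factoring $\alpha$ into its primary components and combining the resulting bounds. Second, I would use the coprimality hypothesis to identify $\alpha$, via the Beauville--Bogomolov form and the period description of $\mathrm{Br}(X)$, with a $p$-torsion element of $T(X)^\vee/T(X)$ (or of $H^2(X,\BZ)/\!\sim$); the point of excluding the primes dividing $I_X$ and $h$ is precisely to ensure that $\alpha$ lifts to a primitive class in the transcendental lattice on which Markman's machine operates cleanly, and the factor $n!$ enters through the comparison between the Mukai-type lattice of the moduli problem and the Hodge structure of $X$. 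Third, I would invoke Markman's theorem to obtain an $\alpha$-twisted hyperholomorphic bundle $E$ and then \emph{compute its rank} in terms of $p$: the expected outcome is $\mathrm{rk}(E) = p^{2n}$ (or a divisor thereof), whence $\mathrm{ind}(\alpha) \mid p^{2n} = \mathrm{per}(\alpha)^{\dim(X)}$.

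\medskip

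The rank computation is where the real work lies, and it is the step I expect to be the main obstacle. Markman's bundles arise as deformations of tautological or Chern-character-prescribed sheaves on moduli spaces of sheaves on $K3$ surfaces, and pinning down the rank requires translating the Brauer class $\alpha$ into a Mukai vector $v$ with $v^2$ and the twisting data controlled, then reading off the rank of the resulting universal/twisted family. I would set this up on a deformation $X$ admitting a degree-$2h$ polarization that is realized as a moduli space $M_H(v)$ of sheaves on a $K3$ surface $S$, use the coprimality to $n!hI_X$ to guarantee that the relevant Mukai vector can be chosen with rank exactly a power of $p$, and transport the conclusion back to the original $X$ by deformation invariance of the index along the hyperholomorphic family. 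Two technical hazards must be handled: (i) Markman's construction may a priori produce the bundle only for $\alpha$ in a specific coset or only after multiplying by a unit coprime to $p$, so I must check that the relevant integer is invertible modulo $p$ — again guaranteed by the coprimality hypothesis — and (ii) the passage from ``$\alpha$-twisted bundle of rank $r$'' to ``$\mathrm{ind}(\alpha) \mid r$'' must be justified in the Azumaya/twisted-sheaf formalism, which is standard but should be cited carefully.

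\medskip

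In summary, the skeleton is: reduce to prime period $p$ coprime to $n!hI_X$; realize $X$ (after deformation) as a moduli space of sheaves on a $K3$ surface and translate $\alpha$ into Mukai-lattice data; apply Markman's existence theorem for projectively hyperholomorphic $\alpha$-twisted bundles; compute the rank to be $p^{2n}$; and conclude $\mathrm{ind}(\alpha) \mid \mathrm{per}(\alpha)^{2n} = \mathrm{per}(\alpha)^{\dim(X)}$ by deformation invariance. The coprimality to $n!hI_X$ is used at essentially every arithmetic juncture, which explains why it appears as the hypothesis, and the rank bound $p^{2n}$ is exactly what distinguishes this weaker, unconditional-in-$n$ estimate from the sharper $\frac{1}{2}\dim(X)$ bound pursued later in the paper.
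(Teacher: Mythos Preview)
Your outline identifies the correct tool (Markman's projectively hyperholomorphic bundles) and the correct target exponent, but it leaves out exactly the part of the argument that constitutes the proof, and a couple of your expectations are off in ways that matter.

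First, the rank is not $p^{2n}$. Markman's bundle $\mathcal{U}^{[n]}$ has rank $n!\,r^n$, where $r$ is the rank of the underlying Mukai vector on the $K3$ surface; in the paper's construction one ends up with $r=4\ell^2$, hence rank $(2^{2n}n!)\ell^{2n}$. The factors $n!$ and $2^{2n}$ are precisely why the hypothesis requires $\mathrm{per}(\alpha)$ coprime to $n!$ (and Lemma~\ref{lem1.1} is invoked to strip them off). So your explanation that ``$n!$ enters through the comparison between the Mukai-type lattice and the Hodge structure'' is not quite right: $n!$ is baked into the rank of the BKR bundle itself.

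Second, and more seriously, you do not say how to produce the $K3$ surface, the Mukai vector, and the parallel transport. This is the substance of the proof. The paper does \emph{not} realize $X$ as a moduli space $M_H(v)$; rather, it constructs a class $L_u = 2\ell\mathcal{A} + 2\mathcal{B} + u\mathcal{D}$ in $H^2(X,\BZ)$ (with $\mathcal{A}$ chosen via a lattice lemma to be orthogonal to $\mathcal{B},\mathcal{D}$ and of divisibility~$1$), proves for suitable $u$ that $L_u$ is primitive with $\mathrm{div}(L_u)\in\{1,2\}$ and $q(L_u)>0$, and then uses the Eichler criterion to find a Picard-rank-$1$ $K3$ surface $S$ and a parallel transport $\rho:H^2(S^{[n]},\BZ)\to H^2(X,\BZ)$ sending $H-\ell\delta$ (or $2H-\ell\delta$) to $\pm L_u$. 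Only then can one write down $(r,m)$ and apply Proposition~\ref{prop1.3}. The coprimality to $h$ is used in this lattice argument (to control divisibility and primitivity of $L_u$), not in any ``lifting to the transcendental lattice'' as you suggest. There is also a numerical side condition~(\ref{VB}) from Markman's construction that must be verified, and the paper handles it by a case split on whether $\ell\mid 2q(\mathcal{B})+1$, choosing $u$ accordingly; this does not appear anywhere in your sketch.

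Finally, the reduction to prime period is unnecessary and is not used in the paper; the argument works uniformly for all $\ell$ coprime to $n!hI_X$.
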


In Section \ref{NumOb}, we discuss numerical obstructions to proving Huybrechts' conjecture (\ref{H_Conj}) using our method. In particular, our approach is not expected to be sufficient to prove Huybrechts' conjecture for the Picard rank $1$ case.

\begin{rmk}
To the best of our knowledge, (\ref{weak_bound}) gives the best bound currently known for $K3^{[n]}$-type varieties of Picard rank $1$. In \cite[Section 2.6]{H_PI}, Huybrechts proved that the Fano variety of lines $F(Y)$ of a nonsingular cubic $4$-fold $Y$, which is a variety of $K3^{[2]}$-type, satisfies
\[
\mathrm{ind}(\alpha) \mid \mathrm{per}(\alpha)^5, \quad \alpha \in \mathrm{Br}(F(Y))
\]
if $\mathrm{per}(\alpha)$ is coprime to some uniform integer depending on $Y$. In this case, Theorem \ref{thm0.2} reads 
\[
\mathrm{ind}(\alpha) \mid \mathrm{per}(\alpha)^4
\]
when $\mathrm{per}(\alpha)$ avoids finitely many prime factors.
\end{rmk}

When $X$ has higher Picard rank, much better bounds can be obtained. For further discussions, we need to introduce some notation. Recall the explicit description of the Brauer~group
\begin{equation*}
\mathrm{Br}(X) = \left( \frac{H^2(X, \BZ)} {\mathrm{Pic}(X)}\right) \otimes \BQ/\BZ,
\end{equation*}
which allows us to present a Brauer class in the form of a ``$B$-field'':
\begin{equation}\label{Brauer}
\alpha = \left[ \frac{\CB}{\ell}\right].
\end{equation}
Here $\ell$ is a positive integer, and $\CB$ is an element of the transcendental lattice $T(X) \subset H^2(X, \BZ)$ which is not divisible by any prime factor of $\ell$. We say that the Brauer class (\ref{Brauer}) is \emph{non-special}~if 
\[
\mathrm{gcd}(q(\CB), \ell) =1,
\]
where $q(\CB) \in \BZ$ is the norm with respect to the Beauville--Bogomolov--Fujiki (BBF) form $q(-, -)$ on~$H^2(X, \BZ)$.  If $\mathrm{per}(\alpha)$ is coprime to $I_X$, we further have
\[
\mathrm{per}(\alpha) =\ell.
\]

The following result is our main tool of constructing Brauer classes satisfying (\ref{H_Conj}).

\begin{thm}\label{thm0.4}
For a Brauer class $\alpha = \left[\frac{\CB}{\ell} \right]$ with $\mathrm{per(\alpha)}$ coprime to $n!I_X$, if there exist classes $\CD, \CD' \in \mathrm{Pic}(X)$ and a positive integer $d$ with $\mathrm{gcd}(d,\ell)=1$ satisfying that
\begin{enumerate}
    \item[(i)] $q(\CD) \equiv -q(\CB)d^2 \mod \ell$,
    \item[(ii)] $\mathrm{gcd}\left(q(\CD,\CD'), q(\CB), \ell \right)=1$,
\end{enumerate}
then (\ref{H_Conj}) holds for $\alpha$.
\end{thm}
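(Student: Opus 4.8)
The plan is to bound $\mathrm{ind}(\alpha)$ by producing an $\alpha$-twisted locally free sheaf on $X$ whose rank is a power of the period, and to obtain this sheaf from Markman's projectively hyperholomorphic bundles \cite{Markman}. Since $\dim(X) = 2n$, the assertion \eqref{H_Conj} reads $\mathrm{ind}(\alpha) \mid \ell^n$, where I first record that $\mathrm{per}(\alpha) = \ell$ because $\mathrm{per}(\alpha)$ is coprime to $I_X$. The elementary input is that any $\alpha$-twisted locally free sheaf $E$ of rank $r$ gives an Azumaya algebra $\operatorname{End}(E)$ of class $\alpha$ and rank $r^2$, so that $\mathrm{ind}(\alpha) \mid r$; it therefore suffices to construct such an $E$ with $r = \ell^n$. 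Because $\gcd(d,\ell) = 1$, multiplication by $d$ is an automorphism of the cyclic group $\langle\alpha\rangle$, so $\mathrm{ind}(d\alpha) = \mathrm{ind}(\alpha)$ and I am free to realize the associate class $d\alpha = [\,d\CB/\ell\,]$ instead.

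First I would translate the hypotheses into a primitive algebraic class in the extended Mukai lattice $\tilde H(X,\BZ) = H^2(X,\BZ) \oplus U$, on which a vector $(r,\xi,s)$ carries the pairing $\langle (r,\xi,s),(r',\xi',s')\rangle = q(\xi,\xi') - rs' - r's$. The natural candidate is
\[
\tilde v = (\ell,\ \CD + d\CB,\ s),
\]
with $s$ an integer chosen below. Since $\CB \in T(X)$ and $\CD \in \Pic(X)$ are orthogonal, $q(\CD,\CB) = 0$, and hypothesis (i) yields
\[
q(\CD + d\CB) = q(\CD) + d^2 q(\CB) \equiv 0 \pmod{\ell}.
\]
This congruence is precisely the integrality that allows $s$ to be chosen so that $\tilde v$ is an \emph{integral} class whose associated $B$-field is $(\CD + d\CB)/\ell \equiv d\CB/\ell \pmod{\Pic(X)}$; in particular the Brauer class attached to $\tilde v$ is $d\alpha$. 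Both $\tilde v$ and the auxiliary vector built from $\CD'$ are of Hodge type $(1,1)$ for the weight-two Hodge structure on $\tilde H(X)$, since $\CB, \CD, \CD'$ all pair trivially with a holomorphic symplectic form. Hypothesis (ii) then supplies primitivity modulo the primes dividing $\ell$: pairing $\tilde v$ against the vector built from $\CD'$ returns, again using $T(X) \perp \Pic(X)$, the integer $q(\CD,\CD')$, and the condition $\gcd(q(\CD,\CD'), q(\CB), \ell) = 1$ guarantees that no prime $p \mid \ell$ divides $\tilde v$, so that the Brauer class has full order $\ell$.

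With $\tilde v$ in place I would invoke Markman's existence theorem for projectively hyperholomorphic bundles: a primitive algebraic class of this shape is realized, by deformation from a Hilbert-scheme (equivalently moduli-space) point where the tautological twisted sheaf is available on an honest $K3$ surface, by a projectively hyperholomorphic twisted locally free sheaf $E$ on $X$. Matching the two numerical invariants, the Brauer class of $E$ is the obstruction determined by the $B$-field of $\tilde v$, namely $d\alpha$, while its rank is $\ell^n$ — the $n$-th power of the rank-$\ell$ datum governing the passage from $S$ to $S^{[n]}$. Consequently $\mathrm{ind}(\alpha) = \mathrm{ind}(d\alpha) \mid \ell^n = \mathrm{per}(\alpha)^{\dim(X)/2}$. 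The coprimality of $\ell$ to $n!$ enters exactly here: it clears the denominators in $\exp(\CB/\ell)$ and in the degree $\leq 2n$ Chern-character and Todd-class manipulations, keeping the relevant characters integral away from the primes dividing $\ell$ and making the rank computation come out as the clean value $\ell^n$.

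I expect the main obstacle to be the passage from the lattice-theoretic vector $\tilde v$ to an actual bundle on the given $X$: one must check that $\tilde v$ meets the hypotheses of Markman's existence and deformation statement — primitivity, Hodge type, and the numerical positivity ensuring nonemptiness of the relevant moduli at the special point — and, crucially, that projective hyperholomorphicity transports the bundle and its Brauer class from the Hilbert-scheme locus to every deformation of $K3^{[n]}$-type, including $X$. The remaining work is bookkeeping of the kind that (i), (ii), and the coprimality to $n!I_X$ are designed to control: confirming that the transported Brauer class is \emph{exactly} $\alpha$ (up to the unit $d$ modulo $\ell$) rather than a proper multiple, and that the rank is exactly $\ell^n$ with no spurious prime factors.
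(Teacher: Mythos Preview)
Your overall strategy---bound the index by producing an $\alpha$-twisted bundle of rank a unit times $\ell^n$, and obtain that bundle from Markman's projectively hyperholomorphic construction---is exactly the paper's. But the execution has two genuine gaps.

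First, the interface with Markman is not as you describe. Markman's input is \emph{not} a Mukai vector in an extended lattice $\tilde H(X,\BZ)$; it is a primitive isotropic Mukai vector $v_0=(r,mH,s)$ on an auxiliary Picard-rank-one $K3$ surface $S$, subject to the numerical condition~(\ref{VB}), together with a parallel transport $\rho:H^2(S^{[n]},\BZ)\to H^2(X,\BZ)$ carrying $-\frac{mH}{r}+\frac{\delta}{2}$ to $\frac{\CB}{\ell}$ modulo $H^2(X,\BZ)+\Pic(X)_\BQ$ (Proposition~\ref{prop1.3}). The paper therefore does concrete lattice work you have skipped: it builds a class $L_u=2\ell\CA+2\CB+u\CD\in H^2(X,\BZ)$ (with $\CA$ supplied by Lemma~\ref{lem2.2}), uses hypothesis~(i) and the choice $du\equiv 2\bmod\ell$ to force $\ell\mid q(L_u)$, uses hypothesis~(ii) with $\CD'$ to prove $L_u$ is primitive of divisibility $1$ or $2$, and then invokes the Eichler criterion and the Torelli theorem to manufacture the pair $(S,H)$ and the parallel transport. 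Your vector $\tilde v=(\ell,\CD+d\CB,s)$ does not plug into any statement in \cite{Markman}; and your assertion that it is of Hodge type $(1,1)$ because ``$\CB$ pairs trivially with a holomorphic symplectic form'' is false---$\CB$ lies in $T(X)$, which is exactly where the symplectic form lives.

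Second, the rank of the bundle produced by Markman's construction is $n!r^n$, not $r^n$: the BKR step from $\CU$ to $\CU^{[n]}$ introduces the factor $n!$, and in the paper one is forced to take $r=4\ell$ (not $\ell$) because of the divisibility analysis of $L_u$. The resulting rank is $(2^{2n}n!)\ell^n$, and the coprimality of $\ell$ to $n!$ is used \emph{only} via Lemma~\ref{lem1.1} to strip the unit $2^{2n}n!$ from the index bound. It has nothing to do with clearing denominators in $\exp(\CB/\ell)$ or Todd classes.
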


As an application, Theorem \ref{thm0.4} implies immediately Theorem \ref{thm0.1} for hyper-K\"ahler varieties of $K3^{[n]}$-type; see Section \ref{revisit}. 

The next theorem is also a consequence of Theorem \ref{thm0.4}, which verifies (\ref{H_Conj}) for most Brauer classes when the Picard rank is at least $2$.

\begin{thm}\label{thm0.6}
    If $X$ has Picard rank at least $2$, then there exists an integer $N_X$ such that 
    \[
\mathrm{ind}(\alpha) \mid \mathrm{per}(\alpha)^{\frac{\mathrm{dim}(X)}{2}}    
\]
for all non-special $\alpha \in \mathrm{Br}(X)$ with $\mathrm{per}(\alpha)$ coprime to $N_X$.
\end{thm}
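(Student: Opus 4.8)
The plan is to deduce Theorem \ref{thm0.6} directly from Theorem \ref{thm0.4} by showing that, for a non-special class, hypotheses (i) and (ii) can always be arranged once $\mathrm{per}(\alpha)$ avoids a fixed finite set of primes. Write $\alpha = [\CB/\ell]$ with $\ell = \mathrm{per}(\alpha)$ (valid once $\ell$ is coprime to $I_X$). Non-speciality says $\gcd(q(\CB),\ell)=1$, i.e.\ $q(\CB)$ is a unit modulo every prime dividing $\ell$. I first observe that condition (ii) of Theorem \ref{thm0.4} is then automatic: any common divisor of $q(\CD,\CD')$, $q(\CB)$ and $\ell$ divides $\gcd(q(\CB),\ell)=1$, so it holds for every $\CD,\CD' \in \Pic(X)$. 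Thus the entire problem reduces to producing a single class $\CD \in \Pic(X)$ and an integer $d$ coprime to $\ell$ with $q(\CD) \equiv -q(\CB)d^2 \pmod{\ell}$; in fact I expect to take $d=1$.

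I would set $N_X$ to be the product of $n!\,I_X$ (so that Theorem \ref{thm0.4} applies and $\mathrm{per}(\alpha)=\ell$), the prime $2$, and all primes dividing the discriminant of the lattice $\Pic(X)$. The BBF form restricts to a nondegenerate form on $\Pic(X)$ of rank $\rho = \mathrm{rk}\,\Pic(X) \geq 2$ (by the Hodge index theorem its signature is $(1,\rho-1)$). For any prime $p \mid \ell$ one then has $p \nmid 2\,\mathrm{disc}(\Pic(X))$, so the reduction of $q|_{\Pic(X)}$ modulo $p$ is a nondegenerate quadratic form over $\BF_p$ of rank $\geq 2$.

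The key point is that a nondegenerate quadratic form of rank $\geq 2$ over $\BF_p$ ($p$ odd) represents every nonzero element of $\BF_p$: in rank $\geq 3$ this is classical, and in rank $2$ both the hyperbolic form and the anisotropic norm form of $\BF_{p^2}/\BF_p$ surject onto $\BF_p^\times$. Since $-q(\CB)$ is a unit modulo $p$, I can solve $q(\CD_0) \equiv -q(\CB) \pmod{p}$ with $\CD_0 \in \Pic(X)\otimes\BF_p$; as $q(\CD_0)$ is a unit, $\CD_0$ is nonzero and nondegeneracy makes the differential $2q(\CD_0,-)$ surjective, so Hensel's lemma lifts the solution to $\BZ/p^{k}$ for the exact power $p^{k}\,\|\,\ell$. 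Assembling these solutions by the Chinese remainder theorem and choosing any integral lift produces $\CD \in \Pic(X)$ with $q(\CD)\equiv -q(\CB)\pmod{\ell}$. Hypothesis (i) of Theorem \ref{thm0.4} then holds with $d=1$, and that theorem yields the bound.

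I expect the only genuine subtlety to be the form-theoretic core of the last paragraph — the uniform representability of all nonzero residues by a nondegenerate binary form, together with the lift to $\BZ/p^{k}$ — and the associated bookkeeping in the definition of $N_X$. This is also precisely where the hypothesis $\rho \geq 2$ enters: for $\rho = 1$ the form is $\CD \mapsto a\,t^2$ and represents only the single square class of $a$, so $-q(\CB)d^2 \equiv a t^2$ would force $-q(\CB)/a$ to be a square modulo $p$, a condition failing for roughly half of all classes. This is the numerical obstruction preventing the method from reaching the Picard rank $1$ case, consistent with Section \ref{NumOb}.
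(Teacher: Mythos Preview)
Your proposal is correct and follows essentially the same route as the paper: reduce to Theorem \ref{thm0.4}, observe that non-speciality makes condition (ii) automatic, and solve the congruence $q(\CD)\equiv -q(\CB)d^2 \pmod{\ell}$ by excluding the primes where $q|_{\Pic(X)}$ degenerates, representing the target residue over each $\BF_p$, lifting via Hensel, and patching by CRT. The paper packages the last step geometrically (Proposition \ref{prop3.1}: nonsingular $\BF_p$-points on the projective quadric $V_p=\{Q=bw^2\}\subset\BP^\mu$), keeping the auxiliary variable $w=d$, whereas you work directly with the form on $\Pic(X)$ and fix $d=1$; your explicit invocation of the rank-$2$ representability fact and the inclusion of the prime $2$ in $N_X$ are arguably cleaner, but the mathematical content is identical.
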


Here, the constant $N_X$ is explicitly determined by $I_X$ and the BBF form on $\mathrm{Pic}(X)$.

\subsection{Acknowledgements}
We are grateful to Daniel Huybrechts and Alex Perry for their comments on the first draft of the paper. J.S.~would like to thank Haoyi Shen for his kind cooperation during the preparation of this paper.

J.H.~was supported by the NSF grant DMS-2401818. D.M.~was supported by a Simons Investigator Grant. J.S.~was supported by the NSF grant DMS-2301474 and a Sloan Research Fellowship. R.Z.~was supported by the NKRD Program of China No.~2020YFA0713200 and LNMS.

\section{Projectively hyperholomorphic bundles}

Throughout this section, we assume that $X$ is a hyper-K\"ahler variety of $K3^{[n]}$-type with $n \geq 2$, and $\alpha = \left[\frac{\CB}{\ell}\right]$ is a Brauer class with $\mathrm{per}(\alpha)$ coprime to $I_X$; in particular we identify $\ell$ and $\mathrm{per}(\alpha)$. The main result of this section is Proposition \ref{prop1.3}.

\subsection{Brauer classes}

The period--index problem for $X$ can be reduced to a geometric question concerning the existence of certain twisted vector bundles.
A comparison of different notions of the index (e.g., the fact that the index of $\alpha$ viewed in $\mathrm{Br}(X)$ coincides with the index of $\alpha$ viewed in $\mathrm{Br}(\mathbb{C}(X))$) may be found in \cite{dJP}.

We write the Brauer class $\alpha = \left[\frac{\CB}{\ell} \right]$ on $X$ as in (\ref{Brauer}), and consider $\alpha$-twisted vector bundles on~$X$. The index of $\alpha$ is bounded above by the rank of any $\alpha$-twisted vector bundle $\CF$ on $X$:
\begin{equation}\label{rank}
\mathrm{ind}(\alpha) \mid \mathrm{rk}(\CF).
\end{equation}
Indeed, $\hom(\CF, \CF)$ is an Azumaya algebra on $X$ of class $\alpha$.
The following lemma is an immediate consequence.

\begin{lem}\label{lem1.1}
Assume $\alpha = \left[\frac{\CB}{\ell} \right]$ with $\mathrm{per}(\alpha) = \ell$. If there exists an $\alpha$-twisted vector bundle whose rank divides $c \ell^e$ for positive integers $c,e$ with $\mathrm{gcd}(c,\ell)=1$, then we have
\[
\mathrm{ind}(\alpha) \mid \mathrm{per}(\alpha)^e.
\]
\end{lem}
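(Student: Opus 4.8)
The plan is to combine the divisibility \eqref{rank} with the elementary fact, recalled in the introduction, that $\mathrm{per}(\alpha)$ and $\mathrm{ind}(\alpha)$ have the same prime factors. First I would take the hypothesized $\alpha$-twisted bundle $\CF$ with $\mathrm{rk}(\CF) \mid c\ell^e$. Applying \eqref{rank} gives $\mathrm{ind}(\alpha) \mid \mathrm{rk}(\CF)$, and hence $\mathrm{ind}(\alpha) \mid c\ell^e$. It then remains to remove the coprime factor $c$, \emph{i.e.}, to upgrade $\mathrm{ind}(\alpha) \mid c\ell^e$ to $\mathrm{ind}(\alpha) \mid \ell^e$.

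The key step is a prime-by-prime valuation argument. Let $p$ be any prime dividing $\mathrm{ind}(\alpha)$. Since $\mathrm{ind}(\alpha)$ and $\mathrm{per}(\alpha) = \ell$ share the same prime factors, $p \mid \ell$, and the hypothesis $\gcd(c,\ell)=1$ then forces $p \nmid c$. Writing $v_p$ for the $p$-adic valuation, we therefore have $v_p(c\ell^e) = v_p(c) + e\,v_p(\ell) = e\,v_p(\ell)$. Combined with $\mathrm{ind}(\alpha) \mid c\ell^e$, this yields $v_p(\mathrm{ind}(\alpha)) \le e\,v_p(\ell) = v_p(\ell^e)$. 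For primes $p$ not dividing $\mathrm{ind}(\alpha)$, the inequality $v_p(\mathrm{ind}(\alpha)) = 0 \le v_p(\ell^e)$ holds trivially. Since $v_p(\mathrm{ind}(\alpha)) \le v_p(\ell^e)$ now holds for every prime $p$, I conclude $\mathrm{ind}(\alpha) \mid \ell^e = \mathrm{per}(\alpha)^e$, as desired.

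There is no serious obstacle here: the content is entirely the input \eqref{rank} (which in turn rests on $\hom(\CF,\CF)$ being an Azumaya algebra of class $\alpha$) together with the shared-prime-factors property. The only point requiring a little care is that one cannot simply ``cancel'' $c$ from a divisibility relation; the coprimality of $c$ and $\ell$ must be routed through the same-prime-factors property, which is precisely what guarantees that none of the prime factors of $\mathrm{ind}(\alpha)$ are hidden inside $c$.
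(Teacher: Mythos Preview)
Your proof is correct and follows essentially the same approach as the paper: both use \eqref{rank} to get $\mathrm{ind}(\alpha)\mid c\ell^e$ and then invoke the shared-prime-factors property together with $\gcd(c,\ell)=1$ to strip off $c$. The paper phrases this as $\gcd(c,\mathrm{ind}(\alpha))=1$ while you spell it out via $p$-adic valuations, but the content is identical.
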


\begin{proof}
By (\ref{rank}) we have
\[
\mathrm{ind}(\alpha) \mid c\ell^e.
\]
Since $\mathrm{ind}(\alpha)$ and $\ell = \mathrm{per}(\alpha)$ share the same prime factors, the assumption that $c, \ell$ are coprime implies 
\[
\mathrm{gcd}(c, \mathrm{ind}(\alpha)) = 1.
\]
Hence we must have
\[
\mathrm{ind}(\alpha) \mid \ell^e = \mathrm{per}(\alpha)^e. \qedhere
\]
\end{proof}

\subsection{Projectively hyperholomorphic bundles} Markman constructed in \cite{Markman} a class of projectively hyperholomorphic bundles. They are our main sources of twisted vector bundles in view of Lemma \ref{lem1.1}. We briefly review the construction as follows, and refer to \cite{Markman} and \cite{MSYZ} for details.

Consider a projective $K3$ surface $S$ with $\mathrm{Pic}(S)=\BZ H$, and a primitive and isotropic Mukai vector
\[
v_0:=(r, mH, s) \in H^*(S, \BZ),\quad  v_0^2=0
\]
with
\[
m\in \BZ, \quad \mathrm{gcd}(r,s)=1,
\]
satisfying
\begin{equation}\label{VB}
    \frac{r}{\rho} \nmid \frac{1}{2}\left( \frac{mH}{\rho}\right)^2 +1, \quad \rho := \mathrm{gcd}(r,m).
\end{equation}
Clearly (\ref{VB}) guarantees that $r\geq 2$. By \cite[Lemma 1.2]{Y}, the numerical condition (\ref{VB}) implies that the moduli space $M$ of stable vector bundles on $S$ with Mukai vector~$v_0$ is again a $K3$ surface. Furthermore, the coprime condition on $r, s$ ensures the existence of a universal rank~$r$ bundle $\CU$ on $M \times S$. Conjugating the Bridgeland--King--Reid (BKR) correspondence yields a vector bundle $\CU^{[n]}$ on~$M^{[n]} \times S^{[n]}$ of rank
\[
\mathrm{rk}(\CU^{[n]}) = n!r^n;
\]
see \cite[Lemma 7.1]{Markman}. Markman further showed in \cite[Section 5.6]{Markman} that the characteristic class of $\CU^{[n]}$ induces a Hodge isometry
\begin{equation}\label{isometry}
    \phi_{\CU^{[n]}}: H^2(M^{[n]}, \BQ) \to H^2(S^{[n]}, \BQ). 
\end{equation}
Let $\Lambda$ be the $K3^{[n]}$-lattice; the Hodge isometry above induces an abstract isometry $\phi \in O(\Lambda_\BQ)$ on the lattice $\Lambda_\BQ$ up to equivalences.

We now consider the moduli space $\FM_{\phi}$ of isomorphism classes of quadruples $(Y, \eta_Y, X, \eta_X)$ where $(Y, \eta_Y), (X, \eta_X)$ are marked hyper-K\"ahler manifolds of $K3^{[n]}$-type and 
\[
\eta_X^{-1}\circ \phi \circ \eta_Y: H^2(Y, \BQ) \to H^2(X,\BQ)
\]
is a Hodge isometry sending some K\"ahler class of $Y$ to a K\"ahler class of $X$. Let $\FM^\circ_{\phi} \subset \FM_{\phi}$ be the connected component which contains the quadruple 
\[
(M^{[n]}, \eta_{M^{[n]}}, S^{[n]}, \eta_{S^{[n]}})
\]
canonically induced by (\ref{isometry}). A key result proved in \cite{Markman}, which is crucial for our purpose, is that the bundle $\CU^{[n]}$ is a projectively hyperholomorphic bundle which deforms along diagonal twistor paths in $\FM^0_\phi$. In particular, for every point 
\[
(Y, \eta_Y, X, \eta_X) \in \FM^0_{\phi},
\]
there exists a twisted vector bundle 
\[
\CE \rightsquigarrow Y\times X
\]
with respect to the Brauer class
\[
\alpha_\CE: = \left[- \frac{c_1(\CU^{[n]})}{n!r^n} \right].
\]
Here we view $H^2(Y\times X, \BZ) = H^2(Y, \BZ) \oplus H^2(X, \BZ)$ as a trivial local system over the moduli space $\FM^0_{\phi}$ via the markings, and therefore $c_1(\CU^{[n]})$ is a well-defined class over $Y\times X$:
\[
c_1(\CU^{[n]}) =  \alpha_Y + \alpha_X \in H^2(Y, \BZ) \oplus H^2(X,\BZ), \quad \alpha_Y \in H^2(Y, \BZ),\quad \alpha_X \in H^2(X, \BZ).
\]
Further restricting $\CE$ over a point $y \in Y$, we obtain a $\left[-\frac{\alpha_X}{n!r^n}\right]$-twisted vector bundle of rank
\[
\mathrm{rk}\left( \CE_y \right) = n!r^n.
\]
Note that under the canonical identification
\[
H^2(S^{[n]}, \BZ) = H^2(S, \BZ) \oplus \BZ \delta, \quad \delta^2 = 2-2n,
\]
the explicit formula of $\alpha_{S^{[n]}}$ over the Hilbert scheme $S^{[n]}$ is given by 
\[
\alpha_{S^{[n]}} = n!r^n \left(\frac{mH}{r} - \frac{\delta}{2} \right);
\]
see \cite[Equation (7.11)]{Markman}. 

We now summarize the discussion above in the following proposition.

\begin{prop}\label{prop1.2} Let $X$ be a variety of $K3^{[n]}$-type with a Brauer class $\alpha = \left[ \frac{\CB}{\ell} \right]$. Assume that there exist 
\begin{enumerate}
    \item[(i)] a projective $K3$ surface with $\mathrm{Pic}(S)=\BZ H$,
    \item[(ii)]  two integers $r, m$ with 
    \[
    2r \mid m^2H^2, \quad \mathrm{gcd}\left(r, \frac{m^2H^2}{2r}\right)=1,
    \]
    satisfying (\ref{VB}).
    \item[(iii)] a parallel transport $\rho:  H^2(S^{[n]}, \BQ)  \to H^2(X, \BQ)$ satisfying
\[
\rho \left(- \frac{mH}{r} + \frac{\delta}{2}\right) = \frac{\CB}{\ell} + \CA + \CD,
\]
where $\CA \in H^2(X, \BZ)$ and $\CD\in \mathrm{Pic}(X)_\BQ$,
\end{enumerate}
Then there is an $\alpha$-twisted vector bundle on $X$ of rank $n!r^n$.
\end{prop}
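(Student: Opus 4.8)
The plan is to realize the given $X$ as the second factor of a quadruple in Markman's moduli space $\FM_\phi^0$, apply his deformation result to produce a twisted bundle on the product, and restrict it to a point of the first factor. I would begin by assembling the Mukai vector. Setting $s := \frac{m^2 H^2}{2r}$, hypothesis (ii) says precisely that $s$ is an integer with $\gcd(r,s)=1$, and $v_0^2 = (mH)^2 - 2rs = m^2H^2 - 2rs = 0$, so $v_0 = (r, mH, s)$ is a primitive isotropic Mukai vector satisfying (\ref{VB}). Consequently every ingredient of the construction reviewed above is available: the moduli space $M$ of stable bundles with Mukai vector $v_0$ is a $K3$ surface, the universal bundle $\CU$ exists, and we obtain $\CU^{[n]}$ on $M^{[n]} \times S^{[n]}$, the Hodge isometry (\ref{isometry}), the abstract isometry $\phi$, and the connected component $\FM_\phi^0$ containing the canonical quadruple $(M^{[n]}, \eta_{M^{[n]}}, S^{[n]}, \eta_{S^{[n]}})$.

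Next I would use the parallel transport $\rho$ of hypothesis (iii) to produce a quadruple $(Y, \eta_Y, X, \eta_X) \in \FM_\phi^0$ whose second factor is the prescribed $X$ and whose induced second-factor parallel transport $\eta_X^{-1} \circ \eta_{S^{[n]}}$ is $\rho$. This is the step I expect to be the main obstacle: one must invoke the Torelli theorem and surjectivity of the period map for $K3^{[n]}$-type varieties to promote the abstract isometry $\rho$ to an actual point of the moduli space, choosing the first factor $(Y, \eta_Y)$ so that $\eta_X^{-1}\circ\phi\circ\eta_Y$ is a Hodge isometry carrying a K\"ahler class to a K\"ahler class, and verifying that the resulting quadruple lies in the connected component of the canonical one. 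Granting this, Markman's theorem applies and yields a twisted bundle $\CE \rightsquigarrow Y \times X$ of class $\left[-\frac{c_1(\CU^{[n]})}{n! r^n}\right]$; restricting $\CE$ to any point $y \in Y$ gives a $\left[-\frac{\alpha_X}{n! r^n}\right]$-twisted vector bundle of rank $n! r^n$.

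It remains to identify this Brauer class with $\alpha$. Since $c_1(\CU^{[n]})$ is constant in the trivial local system over $\FM_\phi^0$, its $X$-component obeys $\alpha_X = \rho(\alpha_{S^{[n]}})$, so the explicit formula $\alpha_{S^{[n]}} = n! r^n\left(\frac{mH}{r} - \frac{\delta}{2}\right)$ together with hypothesis (iii) gives
\[
-\frac{\alpha_X}{n! r^n} = \rho\left(-\frac{mH}{r} + \frac{\delta}{2}\right) = \frac{\CB}{\ell} + \CA + \CD.
\]
Passing to $\mathrm{Br}(X) = \left(H^2(X,\BZ)/\mathrm{Pic}(X)\right) \otimes \BQ/\BZ$, the integral summand $\CA \in H^2(X,\BZ)$ maps into the lattice and hence dies modulo $\BZ$, while the rational Picard summand $\CD \in \mathrm{Pic}(X)_\BQ$ dies under the projection that kills $\mathrm{Pic}(X)_\BQ$. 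Only the transcendental term survives, so the Brauer class of $\CE_y$ equals $\left[\frac{\CB}{\ell}\right] = \alpha$. Thus $\CE_y$ is an $\alpha$-twisted vector bundle of rank $n! r^n$, completing the argument.
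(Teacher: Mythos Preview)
Your proposal is correct and follows the same approach as the paper: set $s=\frac{m^2H^2}{2r}$ so that $v_0=(r,mH,s)$ is primitive isotropic satisfying (\ref{VB}), invoke Markman's construction and deformation result to obtain a twisted bundle $\CE_y$ on $X$ of rank $n!r^n$, and then identify the Brauer class as $\left[\frac{\CB}{\ell}\right]$ using the explicit formula for $\alpha_{S^{[n]}}$ together with hypothesis (iii). The paper's own proof is terser --- it simply cites ``the discussion before Proposition~\ref{prop1.2}'' for the existence of the quadruple in $\FM_\phi^0$ and the resulting $\CE_y$ --- whereas you spell out that producing such a quadruple with second factor $X$ from the parallel transport $\rho$ is the substantive point and may call on Torelli-type input; but this is a difference in level of detail, not in strategy.
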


\begin{proof}
By (ii) we may set 
\[
s: = \frac{m^2H^2}{2r} \in \BZ_{>0},
\]
which satisfies $\mathrm{gcd}(r,s) = 1$. 
The Mukai vector $v_0= (r, mH, s)$ is primitive and isotropic satisfying (\ref{VB}), from which we can construct the $K3$ surface $M$ as a fine moduli space, the vector bundle $\CU$ on $M\times S$, and ultimately the vector bundle $\CU^{[n]}$ on $M^{[n]}\times S^{[n]}$. Then the discussion before Proposition \ref{prop1.2} yields a twisted vector bundle $\CE_y$ of the desired rank with respect to the Brauer class
\[
\left[-\frac{\alpha_X}{n!r^n} \right] = \left[-\frac{\rho(\alpha_{S^{[n]}})}{n!r^n} \right] = \left[\frac{\CB}{\ell} +\CA +D \right] = \left[\frac{\CB}{\ell} \right] \in \left( \frac{H^2(X, \BZ)} {\mathrm{Pic}(X)}\right) \otimes \BQ/\BZ. \qedhere
\]
\end{proof}

\subsection{Removing  technical assumptions}

The coprime condition in Proposition \ref{prop1.2}(ii) is to guarantee that $M$ is a fine moduli space over $S$. The following result is a strengthened version of Proposition \ref{prop1.2}, which proves that this coprime condition, as well as the primitivity assumption of $v_0$, can be completely removed. This is more convenient in practice.

\begin{prop}\label{prop1.3}
   Assume all the assumptions of Proposition \ref{prop1.2} are satisfied, except the coprime assumption 
    \begin{equation}\label{coprime}
    \mathrm{gcd}\left(r, \frac{m^2H^2}{2r}\right)=1.
    \end{equation}
    There is an $\alpha$-twisted vector bundle on $X$ whose rank divides $n!r^n$.
\end{prop}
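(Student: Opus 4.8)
The plan is to deduce this from Proposition~\ref{prop1.2} by two successive reductions: first I would remove any failure of primitivity of $v_0$ by passing to its primitive part, and then I would handle the non-fine case by replacing the universal bundle with a twisted universal sheaf. The first reduction is exactly what downgrades the conclusion from ``rank $=n!r^n$'' to ``rank divides $n!r^n$.''

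For the primitivity reduction, I would set the content $k=\gcd(r,m,s)$ of $v_0=(r,mH,s)$ (with $s=m^2H^2/2r$) and replace $v_0$ by $\tilde v_0:=v_0/k=(\tilde r,\tilde m H,\tilde s)$, which is again primitive and isotropic. Since $\gcd(\tilde r,\tilde m)=\gcd(r,m)/k$, one has
\[
\frac{\tilde m H}{\tilde r}=\frac{mH}{r},\qquad \frac{\tilde r}{\gcd(\tilde r,\tilde m)}=\frac{r}{\gcd(r,m)},\qquad \frac{\tilde m H}{\gcd(\tilde r,\tilde m)}=\frac{mH}{\gcd(r,m)},
\]
so both the numerical condition (\ref{VB}) and the parallel-transport hypothesis (iii) hold verbatim for $\tilde v_0$ with the same $\rho$. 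Hence it suffices to produce an $\alpha$-twisted bundle of rank $n!\tilde r^n$, and since $\tilde r\mid r$ this divides $n!r^n$. After this step I may assume $v_0$ is primitive and isotropic and satisfies (\ref{VB}), but with $\gcd(r,s)$ possibly larger than $1$.

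For the non-fine case, note that by \cite[Lemma 1.2]{Y} and (\ref{VB}) the moduli space $M$ of stable bundles on $S$ with Mukai vector $v_0$ is still a $K3$ surface; the coprimality entered Proposition~\ref{prop1.2} only to make $M$ fine. When $M$ is not fine there is still a twisted universal sheaf $\CU$ on $M\times S$, namely a $p_M^*\beta$-twisted sheaf of rank $r$ for some $\beta\in\mathrm{Br}(M)$, by the standard existence results for universal families (Mukai, C\u{a}ld\u{a}raru). Conjugating the BKR correspondence and running Markman's construction on $\CU$ should then yield a \emph{twisted} bundle $\CU^{[n]}$ of rank $n!\tilde r^n$ on $M^{[n]}\times S^{[n]}$ whose twist is pulled back from the $M^{[n]}$-factor. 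Crucially, the projectively hyperholomorphic structure and the deformation along diagonal twistor paths in $\FM^0_\phi$ are properties of the associated projective bundle, hence insensitive to fineness; so Markman's deformation still produces, for each $(Y,\eta_Y,X,\eta_X)\in\FM^0_\phi$, a twisted bundle $\CE\rightsquigarrow Y\times X$.

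The last step is to restrict to a point. The twisting class of $\CE$ has the form $p_Y^*(\,\cdot\,)+p_X^*[-\alpha_X/n!\tilde r^n]$, where the first summand absorbs the contribution of $\beta$, which lives entirely on the $Y$-factor. Restricting to a point $y\in Y$ kills every class pulled back from $Y$, since it factors through a point, so $\CE_y$ is an honest $[-\alpha_X/n!\tilde r^n]$-twisted bundle on $X$ of rank $n!\tilde r^n$. Since $\alpha_X=\rho(\alpha_{S^{[n]}})$ is computed from the same formula $\alpha_{S^{[n]}}=n!\tilde r^n(\tilde m H/\tilde r-\delta/2)$ as before — it depends only on $v_0$ and not on the twist — the computation in the proof of Proposition~\ref{prop1.2} identifies $\CE_y$ as an $\alpha$-twisted bundle, completing the argument. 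The main obstacle is precisely this last circle of ideas: one must verify that Markman's hyperholomorphic deformation genuinely goes through for the twisted universal sheaf, and that the Brauer twist created by non-fineness is confined to the $M^{[n]}$/$Y$-factor so that it disappears upon restriction to $y$. Everything in the first two reductions is elementary bookkeeping.
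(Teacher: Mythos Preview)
Your primitivity reduction (passing to $v_0/\gcd(r,m,s)$) is exactly the paper's Step~2, and your verification that condition~(\ref{VB}) and hypothesis~(iii) are unchanged under this scaling is correct. The divergence is in how you handle the failure of $\gcd(r,s)=1$ once $v_0$ is primitive.

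You propose to work directly with a $p_M^*\beta$-twisted universal sheaf on $M\times S$ and argue that Markman's machinery survives the twist since it only depends on the associated projective bundle. The paper does something completely different and far more elementary: it \emph{restores} the coprimality by a lattice modification. Concretely, once $v_0=(r,mH,s)$ is primitive one has $\gcd(r,\gcd(s,m))=1$, so one can choose $k$ with $\gcd(r,s+km)=1$. Picking $L\in H^2(S,\BZ)$ with $L\cdot H=1$ and $L^2>0$, one replaces $mH$ by $mH+rkL$; the new $s'=(mH+rkL)^2/2r=s+km+rk^2L^2/2$ then satisfies $\gcd(r,s')=1$. Parallel-transporting to a Picard-rank-$1$ K3 surface $S'$ on which this class is algebraic, one lands squarely in the hypotheses of Proposition~\ref{prop1.2} and applies it as a black box. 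Condition~(iii) survives because $-\frac{mH+rkL}{r}+\frac{\delta}{2}$ differs from $-\frac{mH}{r}+\frac{\delta}{2}$ by the integral class $-kL$, which is absorbed into~$\CA$.

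So the paper never leaves the setting of an honest universal bundle. Your route, by contrast, requires you to know that the BKR conjugation producing $\CU^{[n]}$, the Hodge isometry $\phi_{\CU^{[n]}}$, and the hyperholomorphic deformation along $\FM^0_\phi$ all go through for a $p_M^*\beta$-twisted $\CU$, and that the deformed Brauer class on $Y\times X$ really is supported on the $Y$-factor. You flag this as ``the main obstacle,'' and it is a genuine gap: neither \cite{Markman} nor \cite{MSYZ} treats the twisted-universal-sheaf case, and filling it in (especially the BKR step and tracking the Brauer class through the deformation) would be nontrivial. The paper's parallel-transport trick sidesteps the issue entirely at the cost of a few lines of arithmetic.
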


\begin{proof} We proceed with the following two steps.
\medskip

{\bf \noindent Step 1.} We first show that, if we only remove the coprime assumption (\ref{coprime}) from Proposition~\ref{prop1.2}(ii) while keeping the Mukai vector $v_0 = \left(r, mH, \frac{m^2H^2}{2r}\right)$ primitive, then there exists an $\alpha$-twisted vector bundle of rank $n!r^n$.

More precisely, assume that we have a $K3$ surface $S$ and two integers $r,m$ without (\ref{coprime}). The strategy is that we can always achieve our goal by moving via a parallel transport to a different $K3$ surface $S'$ which satisfies the coprime condition. The construction is as follows.

Set $s:= \frac{m^2H^2}{2r}$. We first claim that we can pick $k\in \BZ_{>0}$ such that
\begin{equation}\label{coprime1}
\mathrm{gcd}(r, s+km)=1.
\end{equation}
Indeed, since $v_0=(r, mH, s)$ is primitive, we must have
\[
\mathrm{gcd}\left(r, \mathrm{gcd}(s,m)\right)=1.
\]
Therefore
\[
\mathrm{gcd}(r, s+km) = \mathrm{gcd}\left(r, \frac{s}{\mathrm{gcd}(s,m)}+k\frac{m}{\mathrm{gcd}(s,m)}\right). 
\]
Since $\frac{s}{\mathrm{gcd}(s,m)}, \frac{m}{\mathrm{gcd}(s,m)}$ are coprime, a desired $k$ clearly exists.

Now, we pick $L\in H^2(S,\BZ)$ with 
\[
L\cdot H=1, \quad L^2 >0.
\]
We set
\[
s':= \frac{(mH+rkL)^2}{2r}.  
\]
Since $m^2H^2 = 2rs$, a direct calculation yields
\[
s' = s + km + rk^2\cdot \frac{L^2}{2} \in \BZ_{>0},
\]
and consequently
\[
\mathrm{gcd}(r, s') = \mathrm{gcd}(r, s+km) =1
\]
where we used (\ref{coprime1}) in the last equation.

Finally, we find a projective $K3$ surface $S'$ of Picard rank $1$ and a parallel transport 
\[
\varphi: H^2(S, \BZ) \to H^2(S', \BZ)
\]
such that the class $\varphi(mH+rkL) \in H^2(S', \BZ)$ is algebraic; the existence of $S'$ is guaranteed by the global Torelli theorem. The new $K3$ surface $S'$ and the Mukai vector
\[
v_0':= (r, \varphi(mH+rkL), s') \in H^*(S', \BZ)
\]
satisfies 
\[
{v'_0}^2 = 0, \quad \mathrm{gcd}(r,s')=1,
\]
and (\ref{VB}), to which we can apply Proposition \ref{prop1.2}. This completes Step 1.
\medskip

{\noindent \bf Step 2.} In general, for a non-necessarily primitive Mukai vector
\[
v_0 = \left(r, mH, \frac{m^2H^2}{2r}\right),
\]
we can write
\[
v_0 = a v_0'
\]
with $a\in\BZ_{>0}$ and $v_0'$ primitive. Then applying Step 1 to $v'_0$ shows that there exists an $\alpha$-twisted vector bundle with rank $n! \left(\frac{r}{a}\right)^n$, which clearly divides $n!r^n$ as desired.

We have completed the proof of the proposition.
\end{proof}

In the next two sections, we explore how to use Markman's projectively hyperholomorphic bundle, via Proposition \ref{prop1.3}, to give new effective bounds for the period--index problem. This question is closely related to the arithmetic of the BBF form on $\mathrm{Pic}(X)$.

\section{Proofs of Theorems \ref{thm0.2} and \ref{thm0.4}}

In this section, we still assume that $X$ is a hyper-K\"ahler variety of $K3^{[n]}$-type with $n \geq 2$. Our purpose is to prove Theorems \ref{thm0.2} and \ref{thm0.4}. Since for both theorems we assume that $\mathrm{per}(\alpha)$ is coprime to $I_X$, the period $\mathrm{per}(\alpha)$ is always $\ell$.

\subsection{Lattice theory}
The second cohomology of $X$, endowed with the BBF form $q(-, -)$, is given by
\begin{equation}\label{lattice}
H^2(X, \BZ) \simeq U^{\oplus 3} \oplus E_8(-1)^{\oplus 2} \oplus \langle 2-2n \rangle,
\end{equation}
where $U$ stands for the rank $2$ hyperbolic lattice. The \emph{divisibility} of a class $L\in H^2(X,\BZ)$ is the positive generator $\mathrm{div}(L)$ of the subgroup
\[
\left\{ q(L, L') \mid L'\in H^2(X, \BZ)  \right\} \subset \BZ.
\]
We recall the following obvious fact.
\begin{lem}\label{lem2.1}
    The divisibility of any primitive class $L \in H^2(X, \BZ)$ satisfies 
\[
\mathrm{div}(L) \mid 2n-2.
\]
\end{lem}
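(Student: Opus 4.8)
The plan is to exploit the explicit lattice description in~\eqref{lattice}, namely
\[
H^2(X,\BZ) \simeq U^{\oplus 3} \oplus E_8(-1)^{\oplus 2} \oplus \langle 2-2n \rangle,
\]
and to observe that the divisibility of a class $L$ measures the largest integer by which all its pairings against $H^2(X,\BZ)$ are divisible. The key structural point is that the orthogonal summand $U^{\oplus 3} \oplus E_8(-1)^{\oplus 2}$ is \emph{unimodular}: on a unimodular lattice every primitive class has divisibility $1$. So the only source of nontrivial divisibility is the last summand $\langle 2-2n\rangle$ of discriminant $2n-2$.

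Concretely, I would first decompose $L = L_0 + c\,e$, where $L_0$ lies in the unimodular part $U^{\oplus 3}\oplus E_8(-1)^{\oplus 2}$, $e$ is the generator of $\langle 2-2n\rangle$ (so $q(e,e)=2-2n$), and $c\in\BZ$. Then for any $L' = L_0' + c'e$ one computes
\[
q(L,L') = q(L_0,L_0') + c c'(2-2n).
\]
As $L'$ ranges over all of $H^2(X,\BZ)$, the first term $q(L_0,L_0')$ ranges over the full subgroup $\{q(L_0,L_0')\}\subset\BZ$, which by unimodularity of the $U^{\oplus 3}\oplus E_8(-1)^{\oplus 2}$ part is all of $\BZ$ as soon as $L_0\neq 0$, and is $\{0\}$ when $L_0=0$. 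The second term contributes multiples of $2n-2$.

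From here the argument splits into two transparent cases. If $L_0 \neq 0$, then taking $L'$ inside the unimodular summand already produces the value $1$ in the pairing subgroup, forcing $\mathrm{div}(L)=1$, which certainly divides $2n-2$. If instead $L_0 = 0$, then $L = c\,e$, and primitivity of $L$ forces $c=\pm 1$, so $L=\pm e$ and every pairing $q(L,L') = \pm c'(2-2n)$ is a multiple of $2n-2$; taking $L'=e$ shows $2n-2$ is actually attained, hence $\mathrm{div}(L)=2n-2$, which divides $2n-2$. In both cases $\mathrm{div}(L)\mid 2n-2$, as claimed.

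There is no real obstacle here—this is why the paper calls it an ``obvious fact''—so the only thing to be careful about is the book-keeping of the orthogonal decomposition and the use of unimodularity: I should make sure that pairing a class $L_0$ in a unimodular lattice against all elements does generate $\BZ$ whenever $L_0\neq 0$ (this is exactly the statement that unimodular lattices have all primitive classes of divisibility one, applied after dividing $L_0$ by its content). The mild subtlety worth a sentence is that $L$ primitive in the whole lattice does not force $L_0$ primitive in the unimodular summand, but this does not matter: we only need the $L_0=0$ versus $L_0\neq 0$ dichotomy, and in the former case primitivity of $L$ directly pins down $c=\pm1$.
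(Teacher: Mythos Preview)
Your approach is the same as the paper's (which also just invokes unimodularity of $U^{\oplus 3}\oplus E_8(-1)^{\oplus 2}$), but there is a genuine gap in your case $L_0\neq 0$. You assert that pairing $L$ against the unimodular summand already produces the value $1$, hence $\mathrm{div}(L)=1$; this is false when $L_0$ is not itself primitive. For a concrete counterexample take $n=2$, so $q(e,e)=-2$, and let $L=2v+e$ with $v$ primitive in the unimodular part. This $L$ is primitive (its $e$-coefficient is $1$), yet every pairing $q(L,L')=2q(v,L_0')-2c'$ is even, so $\mathrm{div}(L)=2$, not $1$.

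The issue is precisely the one you flag in your last paragraph and then dismiss: primitivity of $L$ does \emph{not} force $L_0$ primitive, and this \emph{does} matter for the argument as written. Your parenthetical ``applied after dividing $L_0$ by its content'' goes the wrong way: dividing by the content $d$ gives a primitive class of divisibility $1$, but then pairing with $L_0$ itself only generates $d\BZ$. The fix is short. Write $L_0=d\,L_0''$ with $L_0''$ primitive in the unimodular part; primitivity of $L=dL_0''+ce$ forces $\gcd(d,c)=1$. Pairing against the unimodular summand yields exactly $d\BZ$, and pairing against $e$ yields $c(2-2n)$, so the full pairing subgroup is $d\BZ+c(2n-2)\BZ=\gcd(d,2n-2)\BZ$ (using $\gcd(d,c)=1$). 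Hence $\mathrm{div}(L)=\gcd(d,2n-2)\mid 2n-2$. Alternatively, one can note abstractly that for any primitive $L$ in a lattice $\Lambda$, $\mathrm{div}(L)$ divides the exponent of the discriminant group $\Lambda^\vee/\Lambda$, which here is $\BZ/(2n-2)$.
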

\begin{proof}
This follows directly from the definition, combined with the fact that the first two summands of the right-hand side of (\ref{lattice}) form a unimodular lattice.
\end{proof}

For a given class $L\in H^2(X,\BZ)$, we use $L^\perp$ to denote the sub-lattice of $H^2(X,\BZ)$ given by the orthogonal complement of $L$ with respect to the BBF form $q(-, -)$.

\begin{lem}\label{lem2.2}
Assume that $\CB,\CD$ are two primitive classes in $H^2(X, \BZ)$ with $\CB$ lying in the transcendental part $T(X) \subset H^2(X, \BZ)$ and $\CD \in \mathrm{Pic}(X)$. There exists a class $\CA$ lying in the sub-lattice $\CB^\perp \cap \CD^\perp \subset H^2(X, \BZ)$, which is of divisibility $1$ in the lattice $\CB^\perp \cap \CD^\perp$. 
\end{lem}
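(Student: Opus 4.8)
The plan is to produce a hyperbolic plane $U$ as an orthogonal direct summand of the lattice $N := \CB^\perp \cap \CD^\perp$. If $\CA$ is an isotropic generator of such a summand and $\CA'$ is the complementary generator, then $q(\CA, \CA') = 1$ with $\CA' \in N$, so the subgroup $\{q(\CA, L') : L' \in N\}$ is all of $\BZ$; that is, $\CA$ has divisibility $1$ in $N$, which is exactly the assertion of the lemma. The problem is thereby reduced to a purely lattice-theoretic splitting statement for $N$.

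First I would record the numerics. Since $\CB \in T(X)$ and $\CD \in \mathrm{Pic}(X)$, and $T(X) \perp \mathrm{Pic}(X)$ with $T(X) \cap \mathrm{Pic}(X) = 0$, the classes $\CB, \CD$ are orthogonal and linearly independent. Hence their saturation $K \subset H^2(X, \BZ)$ is a primitive sublattice of rank $2$, and $N = K^\perp$ has rank $\mathrm{rk}(N) = 23 - 2 = 21$. From the signature $(3, 20)$ of $H^2(X,\BZ)$ read off from (\ref{lattice}) and $\mathrm{rk}(K) = 2$, the lattice $N$ has signature $(3 - a, 20 - b)$ with $a + b = 2$ and $0 \le a \le 2$; in every case both entries are strictly positive, so $N$ is indefinite.

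Next I would bound the discriminant group $A_N = N^*/N$. By (\ref{lattice}) the only non-unimodular summand of $H^2(X,\BZ)$ is $\langle 2 - 2n\rangle$, so the discriminant group of $H^2(X,\BZ)$ is the cyclic group $\BZ/(2n-2)$, whose minimal number of generators is $1$. For the primitive sublattice $K$ with orthogonal complement $N$, Nikulin's theory of discriminant forms bounds the minimal number of generators $\ell(A_N)$ by $\ell(A_N) \le 1 + \mathrm{rk}(K) = 1 + 2 = 3$.

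Finally, $N$ is an even indefinite lattice with $\mathrm{rk}(N) = 21 \ge \ell(A_N) + 2$, so by Nikulin's splitting criterion it admits an orthogonal decomposition $N \cong U \oplus N'$; taking $\CA$ to be an isotropic generator of the $U$-summand completes the proof. The main obstacle is the discriminant-form bookkeeping: establishing the inequality $\ell(A_N) \le 1 + \mathrm{rk}(K)$ from the gluing relation between $A_K$, $A_N$, and the discriminant group of $H^2(X,\BZ)$, and checking the precise hypotheses under which Nikulin's theorem guarantees that an even indefinite lattice whose rank exceeds $\ell(A_N) + 1$ splits off a copy of $U$. Once $N$ is known to be indefinite of sufficiently large rank relative to its discriminant length, the remaining steps are routine.
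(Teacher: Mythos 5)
Your strategy is sound and genuinely different from the paper's. You work directly with the rank-$21$ lattice $N=\CB^\perp\cap\CD^\perp$ and invoke Nikulin's genus-theoretic splitting criterion to peel off a hyperbolic plane, whereas the paper never analyzes $N$ itself: it embeds $H^2(X,\BZ)$ into the unimodular Markman--Mukai lattice $\Lambda=U^{\oplus 4}\oplus E_8(-1)^{\oplus 2}$, saturates $\langle\CB,\CD,\delta\rangle$ to a primitive sublattice $\Lambda'$ of rank at most $3$, primitively embeds $\Lambda'$ abstractly into $U^{\oplus 3}$, and uses the uniqueness of primitive embeddings into the unimodular $\Lambda$ to conclude that an entire hyperbolic summand $U'$ of $\Lambda$ lies orthogonal to $\CB,\CD,\delta$; a primitive vector of $U'\cap H^2(X,\BZ)$ is then the desired $\CA$. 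The paper's route buys the hyperbolic plane from unimodularity alone and sidesteps any computation of $\ell(A_N)$; your route is more self-contained but hinges on the discriminant bookkeeping you flag. That bookkeeping does work out: the image of $H^2(X,\BZ)$ in $A_N$ is isomorphic to a subgroup of $A_K$ (hence needs at most $\mathrm{rk}(K)=2$ generators), and $A_N$ is generated by this image together with the images of lifts of generators of the cyclic group $A_{H^2(X,\BZ)}\cong\BZ/(2n-2)$, giving $\ell(A_N)\le 3\le \mathrm{rk}(N)-2$ with room to spare, so the splitting criterion applies.

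The one genuine gap is that you implicitly assume $N$ (equivalently $K$) is nondegenerate. The lemma does not assume $q(\CB)\neq 0$ or $q(\CD)\neq 0$, and since $\CB\perp\CD$ automatically, the vanishing of either norm makes $K$ degenerate; then $\CB$ (or $\CD$) lies in the radical of $N$, your signature count $(3-a,20-b)$ with $a+b=2$ breaks down, and Nikulin's machinery --- which is formulated for nondegenerate lattices --- does not apply to $N$ as written. The repair is cheap: enlarge $K$ to a nondegenerate primitive sublattice $K'\subset H^2(X,\BZ)$ of rank $3$ or $4$ containing $\CB$ and $\CD$, chosen so that $(K')^\perp$ still has positive index at least $1$, and run your argument on $(K')^\perp\subset N$, whose rank still comfortably exceeds $\ell(A_{(K')^\perp})+2$. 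Any vector of divisibility $1$ in this smaller lattice has divisibility $1$ in $N$ as well. With that caveat addressed, your proof is complete.
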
 
 
 \begin{proof}
We fix an isomorphism of lattices
\[
H^2(X, \BZ) \simeq U^{\oplus 3} \oplus E_8(-1)^{\oplus 2} \oplus \BZ\delta, \quad \delta^2 = 2-2n,
\]
where the right-hand side is naturally embedded into the unimodular lattice 
\[
 \Lambda: = U^{\oplus 4} \oplus E_8(-1)^{\oplus 2},
\]
known as the Markman--Mukai lattice. We consider the saturation of the sub-lattice generated by $\CB, \CD, \delta$ with the canonical primitive embedding:
\[
\iota: \Lambda':= \langle \CB, \CD, \delta \rangle_{\mathrm{sat}} \hookrightarrow \Lambda. 
\]
On the other hand, we know from \cite[Proposition 1.8]{HK3} that there is a primitive embedding of abstract lattices:
\[
\Lambda' \hookrightarrow U^{\oplus 3}.
\]
Combined with the natural embedding 
\[
U^{\oplus 3} \oplus U' \hookrightarrow \Lambda
\]
given by the definition of $\Lambda$, we obtain 
\[
\iota': \Lambda' \hookrightarrow U^{\oplus 3} \xrightarrow{~(\mathrm{id},0)~} U^{\oplus 3} \oplus U' \hookrightarrow \Lambda.
\]
Then the uniqueness part of \cite[Corollary 1.9]{HK3} implies that $\iota,\iota'$ differ by an automorphism of~$\Lambda$. In other words, there exists an isomorphism
\[
\Lambda = U^{\oplus 3} \oplus U' \oplus E_8(-1)^{\oplus 2}
\]
such that the image of $\iota: \Lambda' \hookrightarrow \Lambda$ lies completely in the first factor $U^{\oplus 3}$. A primitive vector  
\[
\CA \in U' \cap H^2(X, \BZ) \subset \Lambda
\]
satisfies the desired properties.
\end{proof}

\subsection{Proof of Theorem \ref{thm0.2}}
Let $\CD \in \mathrm{Pic}(X)$ be a primitive polarization of degree
\[
q(\CD) = 2h >0.
\]
We fix a Brauer class
\[
\alpha = \left[\frac{\CB}{\ell}\right] \in \mathrm{Br}(X), \quad \CB \in T(X), \quad \mathrm{per}(\alpha) = \ell;
\]
we further assume that
\begin{equation}\label{assumption0}
\mathrm{gcd}(\ell, n!h) = 1.
\end{equation}
Our goal is to construct an $\alpha$-twisted vector bundle on $X$ whose rank divides $\left(2^{2n}n!\right)\cdot \ell^{2n}$. Then by Lemma \ref{lem1.1} we have
\[
\mathrm{ind}(\alpha ) \mid \mathrm{per}(\alpha)^{2n}
\]
which implies Theorem \ref{thm0.2}.

We choose $\CA \in H^2(X, \BZ)$ as in Lemma \ref{lem2.2} which we fix from now on. Define
\[
L_u: = 2\ell  \CA + 2\CB + u\CD \in H^2(X, \BZ), \quad  u \in \BZ_{>0}. 
\]

\begin{prop}\label{prop2.3}
There exist odd integers $u_1,u_2$ with \[
\ell \mid u_1, \quad \mathrm{gcd}(u_2,\ell)=1
\]
satisfying that for both $i=1,2$:
\begin{enumerate}
    \item[(i)] $L_{u_i}$ is primitive,
    \item[(ii)] $q(L_{u_i}) >0$,
    \item[(iii)] $\mathrm{div}(L_{u_i}) =1$ or $2$.
\end{enumerate}
\end{prop}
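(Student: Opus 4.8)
The plan is to study $L_u = 2\ell\CA + 2\CB + u\CD$ through the BBF form, leaning on two arithmetic facts. Since $n\geq 2$ we have $2\mid n!$, so $\gcd(\ell,n!)=1$ forces $\ell$ to be odd; and every prime factor of $2n-2=2(n-1)$ is $\leq n-1<n$, hence divides $n!$, so $\gcd(\ell,2n-2)=1$ as well. I would first record that $\CA,\CB,\CD$ are pairwise orthogonal: $q(\CA,\CB)=q(\CA,\CD)=0$ by the choice of $\CA$ in Lemma \ref{lem2.2}, while $q(\CB,\CD)=0$ because $\CB\in T(X)$ and $\CD\in\mathrm{Pic}(X)$. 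Hence
\[
q(L_u)=4\ell^2 q(\CA)+4q(\CB)+2hu^2,
\]
which tends to $+\infty$ as $u\to\infty$ since $h>0$; this already yields (ii) for all sufficiently large $u$.

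Next I would bound the divisibility from above. Because $\CA$ has divisibility $1$ in $\CB^\perp\cap\CD^\perp$, there is a class $L''\in\CB^\perp\cap\CD^\perp$ with $q(\CA,L'')=1$; then $q(\CB,L'')=q(\CD,L'')=0$, so $q(L_u,L'')=2\ell$, and therefore $\mathrm{div}(L_u)\mid 2\ell$ for every $u$.

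The heart of the argument is to show $\mathrm{div}(L_u)$ is coprime to $\ell$, which combined with $\mathrm{div}(L_u)\mid 2\ell$ and the oddness of $\ell$ forces $\mathrm{div}(L_u)\mid 2$. Fix an odd prime $p\mid\ell$; since $p\nmid 2n-2$, the BBF form is nondegenerate modulo $p$ on $H^2(X,\BZ)$. I would split into the two prescribed cases. If $\ell\mid u_1$, then modulo $p$ both the $\CA$- and the $\CD$-terms vanish, so $q(L_{u_1},L')\equiv 2q(\CB,L')\pmod p$; as $\CB$ is primitive and the form is nondegenerate mod $p$, the functional $q(\CB,-)$ is nonzero on $H^2(X,\BZ)/pH^2(X,\BZ)$, giving $L'$ with $p\nmid q(L_{u_1},L')$. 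If $\gcd(u_2,\ell)=1$, I would instead take $L'\in\mathrm{Pic}(X)$, so that $q(\CB,L')=0$ and $q(L_{u_2},L')\equiv u_2\,q(\CD,L')\pmod p$; the divisibility of $\CD$ inside $\mathrm{Pic}(X)$ divides $q(\CD)=2h$, and $p\nmid 2h$, so some such $L'$ has $q(\CD,L')\not\equiv 0$, whence $p\nmid u_2$ gives $p\nmid q(L_{u_2},L')$. In both cases $\gcd(\mathrm{div}(L_{u_i}),\ell)=1$, which establishes (iii).

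Finally (i) should follow formally: any prime $p$ dividing $L_{u_i}$ in $H^2(X,\BZ)$ also divides $\mathrm{div}(L_{u_i})\mid 2$, so $p=2$; but for $u_i$ odd one has $L_{u_i}\equiv\CD\pmod 2$, and $\CD$ is primitive, so $2\nmid L_{u_i}$ and $L_{u_i}$ is primitive. It then remains only to choose the integers: take $u_1=\ell(2k+1)$ and $u_2\equiv 1\pmod{2\ell}$ with $k$ and $u_2$ large enough that $q(L_{u_i})>0$; these are odd and satisfy $\ell\mid u_1$ and $\gcd(u_2,\ell)=1$. I expect the only genuine subtlety to be the passage from primitivity of $\CB$ to $p\nmid\mathrm{div}(\CB)$, which is precisely where $\gcd(\ell,n!)=1$ enters, via $\gcd(\ell,2n-2)=1$ and the resulting nondegeneracy of the form modulo each $p\mid\ell$.
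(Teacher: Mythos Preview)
Your proof is correct and takes essentially the same approach as the paper's: bound $\mathrm{div}(L_u)\mid 2\ell$ via the class $L''$ with $q(\CA,L'')=1$, then for each odd prime $p\mid\ell$ rule out $p\mid\mathrm{div}(L_{u_i})$ using that $\CB\not\equiv 0\pmod p$ (for $u_1$) or $q(\CD,\CD)=2h$ with $p\nmid 2h$ (for $u_2$). The only difference is organizational---the paper first proves primitivity (i) and then invokes Lemma~\ref{lem2.1} (that $\mathrm{div}\mid 2n-2$ for primitive classes) to obtain (iii), whereas you establish $\mathrm{div}(L_{u_i})\mid 2$ directly via nondegeneracy of the BBF form modulo $p$ (which likewise uses $p\nmid 2n-2$) and then read off (i) from (iii).
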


\begin{proof}
We complete the proof via two steps.

\medskip
{\bf \noindent Step 1: finding $u_1$.}
Let $u_1$ be a sufficiently large odd multiple of $\ell$, so that (ii) holds. We check in the following that the conditions (i, iii) are satisfied for $L_{u_1}$.

Note that $\frac{L_{u_1}}{2}$ is not integral since $u_1$ is odd. For (i) it suffices to show that $\frac{L_{u_1}}{p}$ cannot be integral for any odd prime number $p$.

Assume that $\frac{L_{u_1}}{p} \in H^2(X, \BZ)$ with $p$ an odd prime number. Then by the definition of $\CA$, there exists $L \in H^2(X, \BZ)$ such that 
\begin{equation}\label{LLL}
q(L, \CA) =1, \quad q(L, \CB) =q(L, \CD)=0.
\end{equation}
Therefore we have
\[
q\left(L, \frac{L_{u_1}}{p}\right) = \frac{2\ell}{p} \in \BZ,
\]
which implies 
\begin{equation}\label{eqn8}
p \mid \ell.
\end{equation}    

Now, if we have $\ell \mid u_1$, then it cannot happen that
\[
\frac{L_{u_1}}{p} \in H^2(X, \BZ).
\]
Otherwise, since $p$ is a prime factor of $\ell$ by (\ref{eqn8}), this implies 
\[
\frac{2\CB}{p} \in H^2(X,\BZ),
\]
contradicting the assumption that $\mathrm{per}(\alpha) = \ell$.

We then observe that (iii) holds automatically for all the $L_{u_1}$ satisfying (i, ii).

Let $L$ be a class satisfying (\ref{LLL}). We have
\[
q(L, L_{u_1}) = 2\ell.
\]
Consequently 
\[
\mathrm{div}(L_{u_1}) \mid 2\ell.
\]
On the other hand, by (i) and Lemma \ref{lem2.1}, we obtain
\[
\mathrm{div}(L_{u_1}) \mid 2n-2.
\]
Then the proposition follows from 
\[
\mathrm{gcd}(2\ell, 2n-2) = 2
\]
which is given by the assumption (\ref{assumption0}).

\medskip
{\bf \noindent Step 2: finding $u_2$.}
Let $u_2$ be a sufficiently large odd positive integer coprime to $\ell$. By the observation above, it suffices to check that $L_{u_2}$ is primitive.

If $L_{u_2}$ is divisible by an odd prime number $p$, we deduce from (\ref{eqn8}) that $p$ is a factor of $\ell$. On the other hand, we know that 
\[
p \mid q(L_{u_2}, \CD) = u_2q(\CD).
\]
By our assumptions on $\ell, u_2$, we have 
\[
\mathrm{gcd}(\ell, u_2q(\CD)) =\mathrm{gcd}(\ell, 2u_2h) = 1.
\]
This is a contradiction; hence $L_{u_2}$ has to be primitive.
\end{proof}

We are now ready to prove Theorem \ref{thm0.2}; we separate the two cases:

\begin{enumerate}
    \item[(A)] $\ell \nmid 2q(\CB) +1$;
    \item[(B)] $\ell \mid 2q(\CB)+1$.
\end{enumerate}

\begin{proof}[Proof in the case (A)]
We pick $u_1$ given by Proposition \ref{prop2.3} and consider the class
\[
L_{u_1} \in H^2(X, \BZ).
\]
Since for any $u$ we have
\[
\frac{1}{2}q(L_{u}) \equiv 2q(\CB) + \frac{1}{2}u^2q(\CD) \mod \ell,
\]
our choice of $u_1$ and the assumption (A) ensure that
\begin{equation}\label{VB1}
\ell \nmid \frac{1}{2}q(L_{u_1}) +1.
\end{equation}
In the following, this equation will be used to verify the condition (\ref{VB}) for the Mukai vectors we construct.
\medskip

{\bf \noindent Case (A.1): $\mathrm{div}(L_{u_1})=1$.} By the Eichler criterion (see~\cite[Lemma 3.5 and Example 3.8]{GHS}) and \cite[Theorem 9.8]{Torelli}, there exist a polarized $K3$ surface $(S,H)$ with
\[
\mathrm{Pic}(S) = \BZ H, \quad H^2= (2n-2)\ell^2+ q(L_{u_1}),
\]
and a parallel transport $\rho: H^2(S^{[n]}, \BZ) \to H^2(X,\BZ)$ satisfying 
\[
\rho(H-\ell \delta) = \epsilon L_{u_1} , \quad \epsilon  = 1 \textrm{ or } {-1}.
\]
By setting 
\[
r: = 4\ell^2, \quad  m := -2\ell \epsilon,
\]
we can check directly that $(S,H, r, m)$ satisfies the assumptions of Proposition \ref{prop1.3}; here (\ref{VB}) is given by (\ref{VB1}):
\[
\frac{1}{2}H^2+1 \equiv \frac{1}{2}q(L_{u_1}) +1 \mod \ell  \quad  \implies \quad \ell \nmid \frac{1}{2}H^2+1.
\]
Hence there exists an $\alpha$-twisted vector bundle of rank $n!r^n = \left(2^{2n}n!\right)\cdot \ell^{2n}$ as desired.
\medskip

{\bf \noindent Case (A.2): $\mathrm{div}(L_{u_1})=2$.} Note that a class of divisibility 2 automatically satisfies
\[
q(L_{u_1}) \equiv 2-2n \mod 8.
\]
As in the first case, there exist a polarized $K3$ surface $(S,H)$ with
\[
\mathrm{Pic}(S) = \BZ H, \quad H^2= \frac{1}{4}\left((2n-2)\ell^2+ q(L_{u_1})\right),
\]
and a parallel transport $\rho: H^2(S^{[n]}, \BZ) \to H^2(X,\BZ)$ satisfying 
\[
\rho(2H-\ell \delta) = \epsilon L_{u_1} , \quad \epsilon  = 1 \textrm{ or } {-1}.
\]
Setting 
\[
r: = 4\ell^2, \quad  m := -4\ell \epsilon,
\]
the quadruple $(S,H, r, m)$ satisfies the assumptions of Proposition \ref{prop1.3}; similarly we need (\ref{VB1}) to guarantee (\ref{VB}). This constructs an $\alpha$-twisted vector bundle of rank $n!r^n = \left(2^{2n}n!\right)\cdot \ell^{2n}$ as desired.
\end{proof}

\begin{proof}[Proof in the case (B)] The proof in the case (B) is parallel; the invariants $r,m$ are constructed by the same formulas as in the case (A). The only difference is that we replace $u_1$ by $u_2$ of Proposition \ref{prop2.3}. More precisely, we pick $u_2$ given by Proposition \ref{prop2.3} and consider the class
\[
L_{u_2} \in H^2(X, \BZ).
\]
Note that
\[
\frac{1}{2}q(L_{u_2}) +1 \equiv (2q(\CB) +1) +\frac{1}{2} u_2^2q(\CD) \equiv \frac{1}{2} u_2^2q(\CD) \mod \ell
\]
where the second equation is given by the assumption (B). By the assumptions of $u_2,\ell$, we have 
\begin{equation}\label{VB2}
    \ell \nmid \frac{1}{2}q(L_{u_2}) +1.
\end{equation}
The rest of the proof is identical to that for the case (A), where (\ref{VB2}) plays the role of (\ref{VB1}).
\end{proof}

\subsection{Numerical obstructions}\label{NumOb}
In the proof above, we applied Proposition \ref{prop1.3} with $r =  4\ell^2$ which gives the bound (\ref{weak_bound}). A natural question is that, can we use this method to achieve the stronger bound (\ref{H_Conj})?
In the following, we discuss numerical obstructions that explain why we do not expect Huybrechts' conjecture to be derived from Proposition \ref{prop1.3}.

If we want to get (\ref{H_Conj}) for a Brauer class $\alpha = \left[\frac{\CB}{\ell} \right]$ with $\ell$ a prime number, we need to set 
\[
r =  c\ell, \quad \mathrm{gcd}(c,\ell)=1.
\]
The condition (iii) of Proposition \ref{prop1.2} (or Proposition \ref{prop1.3}) implies that there exists a parallel transport
\[
\rho: H^2(S^{[n]}, \BZ) \to H^2(X,\BZ)
\]
satisfying 
\[
-2m \rho(H) -c\ell \rho(\delta) = 2c \CB +2c\ell \CA +\CD
\]
with $\CA \in H^2(X,\BZ)$ and $\CD\in \mathrm{Pic}(X)$. The condition 
\[
\ell \mid m^2H^2
\]
(required by Proposition \ref{prop1.2} (ii)) combined with the fact that $\rho$ is an isometry further implies
\begin{equation}\label{main_obs}
\ell \mid q(\CD) + (2c)^2q(\CB).
\end{equation}

If $X$ has Picard rank $1$, it is impossible for (\ref{main_obs}) to hold for all but finitely many prime numbers $\ell$. To see this, we assume $\mathrm{Pic}(X) = \BZ \CR$, and then (\ref{main_obs}) is equivalent to finding an integer $k$ such that
\[
k^2 q(\CR) + (2c)^2 q(\CB) \equiv 0 \mod \ell.
\]
Consequently, this implies
\[
\left( kq(\CR) \right)^2 \equiv -(2c)^2q(\CB)q(\CR) \mod \ell,
\]
forcing the integer $-q(\CR)q(\CB)$ to be a quadratic residue modulo $\ell$. This cannot always hold, even allowing $\ell$ to avoid finitely many primes.

We now prove Theorem \ref{thm0.4} which shows that (\ref{main_obs}) is the \emph{only} obstruction.

\subsection{Proof of Theorem \ref{thm0.4}}

As before, we write the Brauer class $\alpha$ as
\[
\alpha = \left[\frac{\CB}{\ell}\right] \in \mathrm{Br}(X), \quad \CB \in T(X), \quad \mathrm{per}(\alpha) = \ell.
\]
Let $\CD \in \mathrm{Pic}(X)$ be a class satisfying  \begin{equation}\label{quadratic}
q(\CD) \equiv -q(\CB)d^2 \mod \ell, \quad \mathrm{gcd}(d,\ell) =1.
\end{equation}
Recall the class
\[
L_u = 2\ell  \CA + 2\CB + u\CD \in H^2(X, \BZ).
\]
We may assume that $q(\CD) >0$ and $\frac{\CD}{2}$ is not integral; otherwise we modify it by a multiple of~$\ell$ times an ample class. We pick an odd $u \in \BZ_{>0}$ such that
\begin{equation}\label{old_i_ii}
du \equiv 2 \mod \ell, \quad \quad  \quad q(L_u)>0,
\end{equation}
and fix it from now on. 

We note the following lemma; it is the \emph{only} place where (\ref{quadratic}) is essentially used.

\begin{lem}\label{lem2.4}
With $u$ as chosen above, we have 
    \[
    \ell \mid q(L_u).
    \]
\end{lem}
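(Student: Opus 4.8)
The plan is to prove the divisibility by a direct computation of $q(L_u)$ modulo $\ell$, exploiting the three orthogonality relations that the vectors $\CA$, $\CB$, $\CD$ enjoy. First I would expand the BBF norm of $L_u = 2\ell\CA + 2\CB + u\CD$ by bilinearity, obtaining diagonal terms $4\ell^2 q(\CA)$, $4q(\CB)$, $u^2 q(\CD)$ together with the three cross terms proportional to $q(\CA,\CB)$, $q(\CA,\CD)$, and $q(\CB,\CD)$.

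The next step is to observe that all three cross terms vanish. The relations $q(\CA,\CB)=0$ and $q(\CA,\CD)=0$ hold because $\CA$ was chosen in $\CB^\perp \cap \CD^\perp$ via Lemma~\ref{lem2.2}, while $q(\CB,\CD)=0$ holds because $\CB$ lies in the transcendental lattice $T(X)$ and $\CD \in \mathrm{Pic}(X)$, and $T(X)$ is by definition the orthogonal complement of $\mathrm{Pic}(X)$ inside $H^2(X,\BZ)$. Consequently
\[
q(L_u) = 4\ell^2 q(\CA) + 4q(\CB) + u^2 q(\CD),
\]
and reducing modulo $\ell$ annihilates the first term, leaving $q(L_u) \equiv 4q(\CB) + u^2 q(\CD) \pmod{\ell}$.

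Finally I would substitute the defining congruence \eqref{quadratic}, namely $q(\CD) \equiv -q(\CB)d^2 \pmod{\ell}$, to get
\[
q(L_u) \equiv q(\CB)\bigl(4 - u^2 d^2\bigr) = q(\CB)\,(2-ud)(2+ud) \pmod{\ell},
\]
and then invoke the choice $ud \equiv 2 \pmod{\ell}$ from \eqref{old_i_ii}, which forces the factor $2-ud$ to vanish modulo $\ell$. Hence $\ell \mid q(L_u)$, as claimed.

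There is no serious obstacle here: the computation is routine linear algebra once the orthogonality relations are in place. The only point requiring care is bookkeeping the three pairings and confirming that the hypotheses of Lemma~\ref{lem2.2} (for $\CA$) and the transcendental–algebraic orthogonality (for the pair $\CB,\CD$) are exactly what is needed to kill the cross terms; the algebraic identity $4-u^2d^2 = (2-ud)(2+ud)$ then makes transparent why the single congruence $ud\equiv 2$ suffices, clarifying the remark that \eqref{quadratic} is the only place the quadratic hypothesis enters.
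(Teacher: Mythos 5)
Your proof is correct and follows the same route as the paper: the paper's ``direct calculation'' giving $q(L_u) \equiv 4q(\CB) + u^2 q(\CD) \pmod{\ell}$ is exactly your bilinear expansion with the cross terms killed by the choice of $\CA \in \CB^\perp \cap \CD^\perp$ and the orthogonality $T(X) \perp \mathrm{Pic}(X)$, after which both arguments substitute \eqref{quadratic} and $du \equiv 2 \pmod{\ell}$ to conclude. You have merely made explicit the bookkeeping the paper leaves implicit.
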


\begin{proof}
A direct calculation yields
\[
q(L_u) \equiv 4q(\CB) + u^2q(\CD) \mod \ell.
\]
Combining with (\ref{quadratic}) and the assumption (\ref{old_i_ii}) above, the right-hand side is
\[
q(\CB)\left( 4- (du)^2 \right) \equiv 0 \mod \ell. \qedhere
\]
\end{proof}

Next, we show that the assumption (ii) ensures that the class $L_u$ is primitive.

\begin{lem}
    The class $L_u$ is primitive, and the divisibility of $L_u$ is either $1$ or $2$.
\end{lem}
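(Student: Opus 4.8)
The plan is to show first that $L_u$ is primitive, then bound its divisibility exactly as in Proposition \ref{prop2.3}. The key input is the class $L$ produced by Lemma \ref{lem2.2}: since $\CA$ has divisibility $1$ in $\CB^\perp \cap \CD^\perp$, there exists $L \in H^2(X,\BZ)$ with
\[
q(L,\CA) = 1, \quad q(L,\CB) = q(L,\CD) = 0,
\]
and hence $q(L, L_u) = 2\ell$. I would begin by ruling out divisibility by $2$: since $u$ is odd, $L_u \equiv u\CD \equiv \CD \pmod{2 H^2(X,\BZ)}$, so $\tfrac{1}{2}L_u$ is integral if and only if $\tfrac{1}{2}\CD$ is, which we have arranged to be false. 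Thus $2 \nmid L_u$.

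The main work is ruling out odd prime divisors, and this is where assumption (ii) of Theorem \ref{thm0.4} enters. Suppose $\tfrac{1}{p}L_u \in H^2(X,\BZ)$ for an odd prime $p$. Pairing with $L$ gives $q(L, \tfrac{1}{p}L_u) = \tfrac{2\ell}{p} \in \BZ$, so $p \mid \ell$; consequently $\tfrac{1}{p}(2\ell\CA)$ is integral, and therefore so is $\tfrac{1}{p}(2\CB + u\CD)$. Now I would pair this last class against two elements. Pairing with $\CB$, and using $q(\CB,\CD) = 0$ (as $\CB \in T(X)$ and $\CD \in \mathrm{Pic}(X)$), yields $\tfrac{2q(\CB)}{p} \in \BZ$, hence $p \mid q(\CB)$. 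Pairing with the class $\CD'$ furnished by assumption (ii), and using $q(\CB,\CD') = 0$, yields $\tfrac{u\,q(\CD,\CD')}{p} \in \BZ$, hence $p \mid u\,q(\CD,\CD')$. Finally, the congruence $du \equiv 2 \pmod{\ell}$ forces $du \equiv 2 \not\equiv 0 \pmod{p}$, so $p \nmid u$, and therefore $p \mid q(\CD,\CD')$. Combining, $p$ divides all three of $q(\CD,\CD')$, $q(\CB)$, and $\ell$, contradicting $\mathrm{gcd}(q(\CD,\CD'),q(\CB),\ell) = 1$. Hence no odd prime divides $L_u$, and together with the previous paragraph $L_u$ is primitive.

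For the divisibility statement, the relation $q(L,L_u) = 2\ell$ gives $\mathrm{div}(L_u) \mid 2\ell$, while Lemma \ref{lem2.1} applied to the primitive class $L_u$ gives $\mathrm{div}(L_u) \mid 2n-2$. Since $\mathrm{per}(\alpha) = \ell$ is coprime to $n!$ and $2n-2 = 2(n-1)$ divides $n!$ for $n \geq 2$, we have $\mathrm{gcd}(2\ell, 2n-2) = 2$, so $\mathrm{div}(L_u) \mid 2$, i.e. $\mathrm{div}(L_u) \in \{1,2\}$. I expect the only delicate point to be the odd-prime step: one must carefully track the vanishing of the transcendental–algebraic pairings and use $du \equiv 2 \pmod{\ell}$ at exactly the right moment to guarantee $p \nmid u$, which is the mechanism converting assumption (ii) into primitivity.
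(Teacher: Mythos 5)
Your proof is correct and follows essentially the same route as the paper's: both rest on pairing $L_u$ against the class $L$ dual to $\CA$ to force any divisor into $\ell$, then against $\CB$ and $\CD'$ to contradict $\gcd(q(\CD,\CD'),q(\CB),\ell)=1$, and finally on $\gcd(2\ell,2n-2)=2$ for the divisibility bound. The only cosmetic difference is that you rule out prime divisors one at a time where the paper factors $L_u = aL_u'$ and shows $a=1$; your explicit use of $du\equiv 2\pmod{\ell}$ to get $p\nmid u$ is a point the paper leaves implicit, and it is handled correctly.
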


\begin{proof}
We write
\[
L_u = a L_u'
\]
with $a\in \BZ_{>0}$ an odd integer and $L_u'$ primitive. The same argument as in Proposition \ref{prop2.3} shows 
\[
a\cdot\mathrm{div}(L'_u)= \mathrm{div}(L_u) \mid 2\ell, \quad \mathrm{div}(L'_u) \mid 2n-2,
\]
which further implies 
\[
\mathrm{div}(L'_u) = 1 \textrm{ or } 2, \quad a \mid \ell.
\]
Recall the class $\CD'$ which satisfies the assumptiom (ii) in Theorem \ref{thm0.4}. We must have
\[
a \mid q(L_u, \CD') = u q(\CD, \CD');
\]
combined with 
\[
a \mid q(L_u, \CB) = 4q(\CB),
\]
we conclude that
\[
a \mid \mathrm{gcd}\left(q(\CD, \CD'), q(\CB), \ell \right) =1 ~~~\implies~~~ a=1. \qedhere
\]
\end{proof}

The remainder of the proof is parallel to that of Theorem \ref{thm0.2}; we note that in both cases 1 and 2 below, the condition (\ref{VB}) is satisfied automatically due to Lemma \ref{lem2.4}.
\medskip

{\bf \noindent Case 1: $\mathrm{div}(L_u)=1$.} There exist a polarized $K3$ surface $(S,H)$ with
\[
\mathrm{Pic}(S) = \BZ H, \quad H^2= (2n-2)\ell^2+ q(L_u),
\]
and a parallel transport $\rho: H^2(S^{[n]}, \BZ) \to H^2(X,\BZ)$ satisfying 
\[
\rho\left(H-\ell \delta\right) = \epsilon L_u , \quad \epsilon  = 1 \textrm{ or } {-1}.
\]
By setting 
\[
r: = 4\ell, \quad  m := -2 \epsilon,
\]
we can check directly that $(S,H, r, m)$ satisfies the assumptions of Proposition \ref{prop1.3}. Hence there exists an $\alpha$-twisted vector bundle whose rank divides $n!r^n = \left(2^{2n}n!\right)\cdot \ell^n$, and the theorem in this case follows from Lemma \ref{lem1.1}.

\medskip

{\bf \noindent Case 2: $\mathrm{div}(L_u)=2$.} There exist a polarized $K3$ surface $(S,H)$ with
\[
\mathrm{Pic}(S) = \BZ H, \quad 4H^2= (2n-2)\ell^2+ q(L_u),
\]
and a parallel transport $\rho: H^2(S^{[n]}, \BZ) \to H^2(X,\BZ)$ satisfying 
\[
\rho\left(2H-\ell \delta\right) = \epsilon L_u , \quad \epsilon  = 1 \textrm{ or } {-1}.
\]
By setting 
\[
r: = 4\ell, \quad  m := -4 \epsilon,
\]
we can check directly that $(S,H, r, m)$ satisfies the assumptions of Proposition \ref{prop1.3}. Hence there exists an $\alpha$-twisted vector bundle whose rank divides $n!r^n = \left(2^{2n}n!\right)\cdot \ell^n$ as desired.

The proof of Theorem \ref{thm0.4} is completed. \qed

\subsection{Theorem \ref{thm0.1} for $K3^{[n]}$-type revisited}\label{revisit}

We explain in this section that Theorem \ref{thm0.1} for $K3^{[n]}$-type varieties is an immediate consequence of Theorem \ref{thm0.4}.

Assume that $X$ is of $K3^{[n]}$-type admitting a Lagrangian fibration. It has Picard rank $\geq 2$, and the BBF form of the rank $2$ lattice spanned by a polarization and the pullback of the hyperplane class of the base is
\[
Q(x,y) = C_X\cdot xy,
\]
where $C_X \in \BZ$ is an intrinsic invariant of $X$. If $\ell$ is coprime to $C_X$, it is clear that 
\[
Q(x,y) \equiv -q(\CB)d^2 \mod \ell, \quad \mathrm{gcd}(d,\ell)=1
\]
has an integral solution. Moreover, there exist integral $x',y'$ such that
\[
\mathrm{gcd}(Q(x',y'), \ell) =1.
\]
Therefore Theorem \ref{thm0.1} for $X$ follows from Theorem \ref{thm0.4} with 
\[
N_X:= C_X\cdot n!I_X.
\]
This proves Theorem \ref{thm0.1} for $K3^{[n]}$-type varieties.    \qed

We have explained in (\ref{NumOb}) that (\ref{quadratic}) may not be solvable if $X$ has Picard rank $1$. In fact, except the case of Lagrangian fibrations as we discussed above, the equation (\ref{quadratic}) is not solvable for most rank $2$ Picard lattices.

To see a typical example, we consider the quadratic form
\[
Q(x,y) = 2x^2 -10y^2.
\]
If $\ell=9$ and $q(\CB) \equiv 3 \mod 9$, then (\ref{quadratic}) is not solvable. Indeed, a solution to 
\[
Q(x,y) \equiv 0 \mod 3
\]
must satisfy that 
\[
x\equiv y \equiv 0 \mod 3; 
\]
in this case $Q(x,y)$ has to be a multiple of $9$. We discuss this phenomenon geometrically in Section \ref{sec3.2}; in particular, for Picard rank $2$, this is essentially the \emph{only} case where (\ref{quadratic}) is not solvable, and the \emph{non-special} condition in Theorem \ref{thm0.6} rules it out.

\section{Proof of Theorem \ref{thm0.6}}

Let $n \geq 2$. We assume that $X$ is a hyper-K\"ahler variety of $K3^{[n]}$-type of Picard rank 
\[
\mathrm{rk}(\mathrm{Pic}(X)) = \mu \geq 1.
\]
Our purpose is to show that Theorem \ref{thm0.6} is a consequence of Theorem \ref{thm0.4}. Since we assume that the Brauer class $\alpha = [\frac{\CB}{\ell}]$ is non-special, the assumption (ii) of Theorem \ref{thm0.4} is always satisfied. 

As before, we further assume that $\mathrm{per}(\alpha)$ is coprime to $I_X$ so that $\mathrm{per}(\alpha) = \ell$. We study in this section the congruence equation
\begin{equation}\label{quadratic0}
q(\CD) \equiv -q(\CB)d^2 \mod \ell, \quad \mathrm{gcd}(d,\ell) =1,
\end{equation}
which completes the proof.

\subsection{Rational points}
We write the Brauer class $\alpha = \left[ \frac{\CB}{\ell}\right]$ as before. In this section, we reduce the existence of $\CD,d$ satisfying (\ref{quadratic0}) to the existence of a nonsingular rational point on a quadric hypersurface.

We use $Q$ to denote the BBF form on $\mathrm{Pic}(X)$ which is non-degenerate, and we set
\[
b:= -q(\CB) \in \BZ. 
\]
We consider 
\[
V:=\{ Q(x_1, \cdots, x_\mu)  - bw^2=0\} \subset \BP^\mu.
\]
Note that $V$ actually defines a scheme over $\BZ$, which further induces a variety $V_p$ over $\overline{\BF}_p$ for every prime $p$.

\begin{prop}\label{prop3.1}
Assume that for every prime factor $p$ of $\ell$ there is an $\BF_p$-rational point lying in the Zariski open subset
\[
V_p \cap \{w \neq 0\}  \subset V_p
\]
which is a nonsingular point of the $\overline{\BF}_p$-variety $V_p$. Then there exists $\CD \in \mathrm{Pic}(X)$ and $d$ satisfying (\ref{quadratic0}).
\end{prop}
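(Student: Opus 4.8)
The plan is to obtain \eqref{quadratic0} by $p$-adically lifting the hypothesized mod-$p$ points and then gluing by the Chinese Remainder Theorem. Set $b := -q(\CB)$, so that \eqref{quadratic0} with $d = 1$ reads $q(\CD) \equiv b \bmod \ell$. It therefore suffices to produce a class $\CD \in \mathrm{Pic}(X)$ with $q(\CD) \equiv b \bmod \ell$ and then take $d = 1$, which is automatically coprime to $\ell$. Fixing an identification $\mathrm{Pic}(X) \cong \BZ^\mu$ (so that $q = Q$ in coordinates) and factoring $\ell = \prod_i p_i^{a_i}$, the problem splits: if for each $p_i$ I can find $y^{(i)} \in (\BZ/p_i^{a_i})^\mu$ with $Q(y^{(i)}) \equiv b \bmod p_i^{a_i}$, then any integral vector congruent to $y^{(i)}$ modulo $p_i^{a_i}$ for all $i$ defines the desired $\CD$.

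First I would fix a prime $p \mid \ell$, say $p^a \,\|\, \ell$, and work in the affine chart $\{w \neq 0\}$, normalized to $\{w = 1\} \cong \BA^\mu$, where $V$ is cut out by $g(y) := Q(y) - b$. The hypothesis provides an $\BF_p$-rational point $\bar y$ with $g(\bar y) \equiv 0 \bmod p$ that is nonsingular on $V_p$; since the chart records $V_p$ as the hypersurface $\{g = 0\}$, nonsingularity is exactly the nonvanishing of the gradient $\nabla Q(\bar y)$ over $\overline{\BF}_p$, i.e.\ some partial $\partial Q/\partial y_j(\bar y)$ is a unit in $\BF_p$. Multivariate Hensel's lemma then lifts $\bar y$ to a point $y \in \BZ_p^\mu$ with $Q(y) = b$ exactly in $\BZ_p$; reducing modulo $p^a$ gives the required $y^{(i)}$. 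Equivalently, one checks that $V$ is flat over $\BZ_p$ near the point and that fiberwise smoothness promotes to smoothness of $V \to \Spec \BZ_p$, so the smooth point lifts to a $\BZ_p$-section.

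Running this over all $p \mid \ell$ and gluing by the Chinese Remainder Theorem produces $\CD$ with $q(\CD) \equiv b \bmod \ell$, and $d = 1$ completes \eqref{quadratic0}. It is worth emphasizing where each hypothesis is used: the condition $w \neq 0$ is what allows the normalization $w = 1$, hence the clean reading $q(\CD) \equiv b$ with $d$ a unit modulo $\ell$ (a point with $w = 0$ would lie on the projectivized cone $\{Q = 0\}$ and carry no information about representing $b$); and the nonsingularity of the point is precisely the Jacobian input that drives the lifting.

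The main obstacle is the lifting step, i.e.\ verifying that the geometric nonsingularity of $V_p$ at the given point really yields a unit partial derivative over $\BZ_p$, and hence a Hensel lift; this must be argued uniformly in $p$, including $p = 2$, where the passage from a quadratic form to its polar bilinear form is degenerate and one should rely on the stated geometric smoothness rather than on a bilinear-form computation. One should also dispose of the degenerate possibility that $g \equiv 0 \bmod p$ (equivalently $p$ divides $b$ and every coefficient of $Q$), in which case $V_p$ is all of $\BP^\mu$ and the gradient argument is vacuous; this case does not arise for the primes relevant to Theorem \ref{thm0.6}, since there $p$ is taken coprime to the discriminant data built into $N_X$, forcing $Q \not\equiv 0 \bmod p$.
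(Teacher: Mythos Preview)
Your argument is correct and follows essentially the same route as the paper: reduce modulo each prime power of $\ell$ via the Chinese Remainder Theorem, then invoke Hensel's lemma at the hypothesized nonsingular $\BF_p$-point to lift to $\BZ_p$. The only cosmetic difference is that you normalize to the chart $w=1$ at the outset (so $d=1$), whereas the paper keeps the variable $w$ and reads off $d$ from it; your explicit remark on the degenerate case $g\equiv 0 \bmod p$ is in fact slightly more careful than the paper's treatment.
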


\begin{proof}
    We proceed the proof by the following steps. We assume that 
    \[
    \ell = p_1^{e_1}p_2^{e_2}\cdots p_t^{e_t}
    \]
    with $p_i$ pairwise distinct primes.
    
\medskip
{\bf \noindent Step 1.} Our purpose is to find integers $x_i$ and integer $w$ with $\mathrm{gcd}(w,\ell)=1$ such that
\begin{equation}\label{original}
Q(x_1, \cdots, x_\mu)  - bw^2 \equiv 0 \mod \ell.
\end{equation}
These integers satisfy automatically
\begin{equation}\label{mod_p_power}
Q(x_1, \cdots, x_\mu)  - bw^2 \equiv 0 \mod p_j^{e_j}, \quad \mathrm{gcd}(w,p_j)=1
\end{equation}
for every $i \in \{1,2,\cdots,t\}$. Conversely, if we can find solutions 
\[
x_{i,1}, x_{i,2}, \cdots, x_{i,\mu}, w_i \in \BZ
\]
to (\ref{mod_p_power}) for every $i$, then the Chinese Remainder Theorem implies that there exists a solution to the congruence equations
\[
x_j \equiv x_{i,j} \mod  p_j^{e_j}, \quad w \equiv w_j \mod p_j^{e_j},
\]
which further gives a solution to (\ref{original}).

\medskip

{\bf \noindent Step 2.} By Step 1, from now on we only focus on a prime $p\in \{p_1, p_2, \cdots, p_t\}$ and consider the equation
\begin{equation}\label{mod_mod_p}
Q(x_1, \cdots, x_\mu)  - bw^2\equiv 0 \mod p^{e}, \quad \mathrm{gcd}(w,p)=1.
\end{equation}

If $V_p \cap \{w \neq 0\}$ has an $\BF_p$-rational point, we may assume that there are integers $x_i,w$ satisfying 
\[
Q(x_1, \cdots, x_\mu)  - bw^2 \equiv 0 \mod p, \quad \mathrm{gcd}(w,p)=1.
\]
Furthermore, if this rational point is nonsingular on $V_p$, then by Hensel's lemma this $\BF_p$-point can be lifted to a $\BZ_p$-point. In particular, we can find a solution to the congruence equation~(\ref{mod_mod_p}).
\end{proof}

\subsection{Proof of Theorem \ref{thm0.6}}\label{sec3.2}

We first revisit the example in Section \ref{revisit} from the perspective of Proposition \ref{prop3.1}; recall that
\[
Q(x, y)= 2x^2 -10y^2, \quad p=3, \quad \ell = p^2 = 9, \quad q(\CB)=12. 
\]
The variety $V_3$ is given by
\[
V_3 =\{ 2x^2-10y^2 = 0 \} \subset \BP^2_{x,y,z}
\]
where $[0:0:1]$ is the only rational point. Geometrically, the variety $V_3$ is the union of $2$ lines, but both lines are not defined over $\BF_3$. The intersection of the two lines is the unique rational point, which is singular. Note that this is essentially the only bad situation: the conic is degenerate and the only rational point is the node. 

Now we prove Theorem \ref{thm0.6}.

To illustrate the idea, we first treat the $\mu=2$ case.  Since $Q(x,y)$ is non-degenerate, there are only finitely many primes $p$ satisfying that the affine curve
\[
\{ Q(x,y) = 0\}\setminus \{(0,0)\} \subset \BA^2
\]
is singular over $\overline{\BF}_p$. Let $C_Q$ be the product of all these primes. Then for any prime $p$ coprime to $C_Q$ and any $b$ coprime to $p$, the projective curve $V_p$ is nonsingular. So it is a nonsingular conic, and is isomorphic to $\BP^1$. Hence its affine part $\{z\neq 0\}$ must contain a rational point. By Proposition \ref{prop3.1}, the theorem holds with the constant $N_X := C_Q\cdot n!$.

The general case is similar. Consider $C_Q$ given by the product of all primes $p$ satisfying that the affine cone 
\[
\{Q(x_1,x_2,\cdots, x_\mu) = 0 \} \subset \BA^\mu
\]
has singular point other than $(0,0,\cdots, 0)$ over $\overline{\BF}_{p}$, and again set
\[
N_X:= C_Q \cdot n!I_X.
\]
To complete the proof, we show that when $\mathrm{gcd}(p, N_X)=1$ the equation
\[
Q(x_1,x_2, \cdots, x_\mu) \equiv bw^2 \mod p^e, \quad \mathrm{gcd}(w, p)=1
\]
always has a solution. Since $\mathrm{gcd}(b,p)=1$, the variety $V_p$ is a nonsingular quadric hypersurface in $\BP^\mu$; therefore, the assumption of Proposition \ref{prop3.1} is satisfied, which implies the existence of desired $\CD,d$. \qed


\begin{thebibliography}{99}






\bibitem{Auel} A. Auel, E. Brussel, S. Garibaldi, and U.  Vishne, {\em Open problems on central simple algebras,} Transform. Groups 16 (2011), no. 1, 219--264.










\bibitem{CT-gabber} J.-L. Colliot-Th\'el\`ene, {\em Exposant et indice d'alg\`ebres simples centrales non ramifi\'ees,} Enseign. Math. (2) 48 (2002), no.~1-2, 127--146.

\bibitem{CT-bourbaki} J.-L. Colliot-Th\'el\`ene, {\em Alg\`ebres simples centrales sur les corps de fonctions de deux variables (d'apr\`es A. J. de Jong),} S\'eminaire Bourbaki, Vol. 2004/2005, Ast\'erisque No. 307 (2006), Exp. No. 949, ix, 379--413.

\bibitem{CT-german} J.-L. Colliot-Th\'el\`ene, {\em Die Brauersche Gruppe; ihre Verallgemeinerungen und Anwendungen in der arithmetischen Geometrie,} arXiv:2311.02437.


\bibitem{dJ} A.~J. de~Jong, {\em The period--index problem for the Brauer group of an algebraic surface,} Duke Math. J. 123 (2004), no.~1, 71--94.

\bibitem{dJP} A.~J. de~Jong and A. Perry, {\em The period--index problem and Hodge theory,} arXiv:2212.12971.


\bibitem{GHS} V. Gritsenko, K. Hulek, and G. K. Sankaran, {\em Moduli spaces of irreducible symplectic manifolds,} Compos. Math. 146 (2010), no. 2, 404--434.


\bibitem{HP} J. Hotchkiss and A. Perry, {\em The period--index conjecture for abelian threefolds and Donaldson--Thomas theory,} arXiv:2405.03315.





\bibitem{HK3} D. Huybrechts, {\em Lectures on $K3$ surfaces,} Cambridge Stud. Adv. Math., 158, Cambridge University Press, Cambridge, 2016, xi+485 pp.


\bibitem{H_PI} D. Huybrechts, {\em The period--index problem for hyperk\"ahler manifolds,} arXiv.2411.17604.


\bibitem{HM2} D. Huybrechts and D. Mattei, {\em Splitting unramified Brauer classes by abelian torsors and the period--index problem,} Math. Ann., to appear.

\bibitem{HS} D. Huybrechts and S. Schr\"oer, {\em The Brauer group of analytic $K3$ surfaces,} Int. Math. Res. Not. 2003, no. 50, 2687--2698.




\bibitem{Torelli} E. Markman, {\em A survey of Torelli and monodromy results for holomorphic-symplectic varieties,} Complex and differential geometry, 257--322, Springer Proc. Math., 8, Springer, Heidelberg, 2011.




\bibitem{Markman} E. Markman, {\em Rational Hodge isometries of hyper-K\"ahler varieties of $K3^{[n]}$-type are algebraic,} Compos. Math. 160 (2024), no. 6, 1261--1303.

\bibitem{MSYZ} D. Maulik, J. Shen, Q. Yin, and R. Zhang, {\em The $D$-equivalence conjecture for hyper-K\"ahler varieties via hyperholomorphic bundles,} arXiv 2408.14775v4.









\bibitem{Y} K. Yoshioka, {\em Stability and the Fourier--Mukai transform. II,} Compos. Math. 145 (2009), no. 1, 112--142.

\end{thebibliography}
\end{document}